\documentclass[a4paper,11pt,reqno]{amsart}
\usepackage{amsmath,amsthm,amssymb}
\usepackage{color}
\usepackage[body={16cm,22.8cm},centering]{geometry} 
\hfuzz6pt 
\usepackage{enumitem} 
\usepackage{latexsym}
\usepackage{stmaryrd}
\usepackage[normalem]{ulem}   	%strike out text \sout

% THEOREM Environments ---------------------------------------------------

\theoremstyle{plain}
\newtheorem{theorem}{Theorem}[section]
\newtheorem{lemma}[theorem]{Lemma}
\newtheorem{proposition}[theorem]{Proposition}
\newtheorem{corollary}[theorem]{Corollary}
\newtheorem{definition}[theorem]{Definition}

\theoremstyle{remark}

\numberwithin{equation}{section}

%%%Macro
\newcommand{\R}{\mathbb{R}} 
\newcommand{\N}{\mathbb{N}}

\newcommand{\Om}{\Omega}
\newcommand{\Sf}{\mathbb{S}}

\newcommand{\A}{\mathcal{A}}
\newcommand{\B}{\mathcal{B}}
\newcommand{\D}{\mathcal{D}}
\newcommand{\E}{\mathcal{E}}

\newcommand{\e}{\varepsilon}
\newcommand{\mc}{\mathcal}

\newcommand{\dd}{\mathrm{d}}
\renewcommand{\div}{\operatorname{div}}

\DeclareMathOperator{\cof}{cof}

\DeclareMathOperator{\imG}{im_G}
\DeclareMathOperator{\imT}{im_T}
\DeclareMathOperator{\loc}{loc}
\DeclareMathOperator{\adj}{adj}

\DeclareMathOperator{\rel}{rel}

\DeclareMathOperator{\sgn}{sgn}

\newcommand{\weakcs}{\overset{*}{\rightharpoonup}}
\renewcommand{\vec}[1]{\text{\boldmath $#1$}}

\mathchardef\emptyset="001F

%%%%%%%%%%%%%%%%%%%%CORPO PRINCIPLE%%%%%%%%%%%%%%%%%%%%%%%%%%%%%

\begin{document}
\title[]{A relaxation approach to the minimisation of the neo-Hookean energy in 3D}
\author{Marco Barchiesi}
\author{Duvan Henao}
\author{Carlos Mora-Corral}
\author{R\'emy Rodiac}
%\thanks{This work has been supported by.}
\date{\today}

\address[Marco Barchiesi]{
Dipartimento di Matematica, Informatica e Geoscienze, Universit\`a degli Studi di Trieste,
Via Weiss 2 - 34128 Trieste, Italy.
}
\email{barchies@gmail.com}

\address[Duvan Henao]{Instituto de Ciencias de la Ingenier\'ia, Universidad de O'Higgins. Rancagua, Chile \& Center for Mathematical Modeling.}
\email{duvan.henao@uoh.cl}

\address[Carlos Mora-Corral]{Departamento de Matem\'aticas, Universidad Aut\'onoma de Madrid,
28049 Madrid, Spain  and Instituto de Ciencias Matem\'aticas,
CSIC-UAM-UC3M-UCM, 28049 Madrid, Spain.
}
\email{carlos.mora@uam.es}

\address[R\'emy Rodiac]{Universit\'e Paris-Saclay, CNRS,  Laboratoire de math\'ematiques d'Orsay, 91405, Orsay, France \& Institute of Mathematics, University of Warsaw, Banacha 2, 02-097
Warszawa, Poland}
\email{rrodiac@mimuw.edu.pl}

\begin{abstract}
Despite its high significance in nonlinear elasticity, the neo-Hookean energy is still not known to admit minimisers in some appropriate admissible class. Using ideas from relaxation theory, we propose a larger minimisation space and a modified functional that coincides with the neo-Hookean energy on the original space.
This modified energy is the sum of the neo-Hookean energy and a term penalising the singularities of the inverse deformation.
The new functional attains its minimum in the larger space, so the initial question of existence of minimisers of the neo-Hookean energy is thus transformed into a question of regularity of minimisers of this new energy.
\end{abstract}
\keywords{Nonlinear elasticity, neo-Hookean energy, Relaxation process}

\subjclass[2020]{49J45,49S05, 49Q20, 74B20, 74G65}
\maketitle

%49J45 Methods involving semicontinuity and convergence; relaxation
%49S05 Variational principles of physics
% 49Q20Variational problems in a geometric measure-theoretic setting
%74B20 Nonlinear elasticity
%74G65Energy minimization in equilibrium problems in solid mechanics
%\tableofcontents

\section{Introduction}

\subsection{Overview of the problem}
The neo-Hookean model, given its widespread use, is highly significant in nonlinear elasticity. In this model, minimisers of the neo-Hookean energy
\begin{equation}\label{eq:neo_Hook_energy}
E(\vec u)=\int_\Om \left[ |D \vec u|^2+ H(\det D \vec u) \right] \dd\vec x
\end{equation}
are sought in a space of orientation-preserving maps (i.e., with \(\det D \vec u>0\) a.e.) satisfying some injectivity conditions (e.g., \(\vec u\) one-to-one a.e.) in order to avoid interpenetration of matter. Here \(H: (0, \infty) \rightarrow [0, \infty) \) is a convex function such that 
\begin{align}
	\label{eq:explosiveH}
\lim_{t\rightarrow \infty} \frac{H(t)}{t}=\lim_{s \rightarrow 0} H(s)=\infty,
\end{align}
\( \Om \subset \R^3 \) represents the reference configuration of an elastic body and  \( \vec u: \Om \rightarrow \R^3 \) is the deformation map.
Unfortunately, the coercivity of the neo-Hookean energy is not sufficient to apply the current theories in calculus of variations to deduce existence of a minimiser in an appropriate space. Indeed, the neo-Hookean energy  is a borderline case of energies that admit minimisers, like 
\[
 \int_\Om \left[ |D\vec u|^p+H(\det D \vec u) \right] \dd \vec x
\]
with \(p>2\) or
\[
 \int_\Om \left[ |D \vec u|^2+H(\det D \vec u)+ \widetilde{H}(|\cof D \vec u|)\right] \dd \vec x ,
\]
where \(\widetilde{H}\) is superlinear at infinity; cf., e.g., \cite{Ball77,BallMurat1984,Muller90,MuQiYa94,MuSp95,HeMo10,HeMo11,HeMo12,HeMo15} and references therein. 
The difficulty one has to face in minimising the neo-Hookean energy is due to the lack of compactness of the minimisation space with respect to the \(H^1\) convergence, as shown by an example of Conti \& De Lellis \cite{CoDeLe03}; see also \cite{BaHEMoRo_24PUB,Dolezalova_Hencl_Maly_2023}. For some results on existence of minimisers of the energy \eqref{eq:neo_Hook_energy} in the axisymmetric setting we refer to \cite{HeRo18,BaHEMoRo_23PUB}, but we emphasise that the goal of this article is to consider the general 3D case.

When one cannot prove the existence of minimisers via the direct method of calculus of variations, a common strategy consists in splitting the difficulty into two steps. The first step, called relaxation, aims at obtaining existence of minimisers of a modified energy in a bigger 
and less regular space, with the requirement that the modified energy coincides with the original one in the original space. The purpose of the second step is to prove regularity of one of the minimisers obtained in the previous step and to show that it actually belongs to the original smaller space. Our goal in this paper is to implement the first step for the minimisation of the neo-Hookean energy and to transform the existence problem into a regularity problem for a modified energy. The new energy we propose is motivated by our previous work \cite{BaHEMoRo_23PUB} where the same problem was considered in the particular case of axisymmetric deformations.

Before entering into details, we note that the process of relaxation gives rise to natural spaces in calculus of variations.
For instance,
minimising sequences of \( \|D u\|_{L^1 (\Om)} \) among \(W^{1,1}\) functions with prescribed Dirichlet data are not compact, and a larger space more suitable to the problem is the space of functions of bounded variation ($BV$).
Another example is the minimisation of the Dirichlet energy
on the space $H^1_{\vec b}(\Omega; \Sf^2)\cap C^0(\overline \Omega, \Sf^2)$ of continuous unit-valued $H^1$ maps with prescribed Dirichlet data $\vec b$ on $\partial \Omega$, $\Omega\subset \R^3$,
a problem extensively studied beginning with the pioneering works  \cite{BrCoLi86,Hardt_Lin_1986,BeBrCo90}.
Since $H^1_{\vec b}\cap C^0$ is not weakly compact, the relaxation leads to the minimisation of a modified energy functional in the larger space of unit-valued maps in $H^1_{\vec b}$ that satisfy the boundary condition but are not necessarily continuous.

\subsection{Setting and statement of the main result}

We now describe more precisely our minimisation setting. We work with a strong form of the Dirichlet boundary 
condition, namely, we choose  a smooth bounded domain \(\widetilde{\Om}\) of \(\R^3\) compactly included in $\Om$,
and we require that deformations coincide with a bi-Lipschitz orientation-preserving homeomorphism
 $\vec b:\Om \rightarrow \R^3$ not only on $\partial \Om$ but on the whole of
$\Om \setminus \widetilde{\Om}$. We define
\begin{equation*}%\label{def:images_b}
\Om_\vec b:= \vec b(\Om) \quad \text{ and } \quad \widetilde{\Om}_{\vec b} := \vec b(\widetilde{\Om}).
\end{equation*}
We require the deformations to be orientation preserving, i.e., to satisfy \(\det D \vec u>0\) a.e..
Since interpenetration of matter is physically unrealistic, we also ask that the deformations satisfy some injectivity conditions:
first of all to be one-to-one a.e. 
We recall that \(\vec u:\Om \rightarrow \R^3\) being one-to-one a.e.\ means that there exists a set \(N\) of zero Lebesgue measure such that \( \vec u|_{\Om \setminus N}\) is one-to-one. 
Secondly, since the interpenetration of matter is not merely injectivity, we would like to request the deformations to 
satisfy the well-known INV condition (see \cite{MuSp95, CoDeLe03}).
Simplifying, the INV condition means that after the deformation, matter coming from any subregion $U$ remains 
enclosed by the image of $\partial U$ and matter coming from outside $U$ remains exterior to the region enclosed 
by the image of $\partial U$.
Because of that, we will impose that maps in $\A$ (the admissible class) satisfy the \emph{divergence identities:}
\begin{equation}
\label{eq:divergence_identities}
\operatorname{Div}\, \big ( (\adj D\vec u)\vec g\circ \vec u \big ) = (\div \vec g)\circ \vec u \,\det D\vec u\quad \forall\,\vec g \in C_c^1(\R^3,\R^3) ,
\end{equation}
where $\operatorname{Div}$ is the distributional divergence in $\Om$.
Indeed, one can use the Brezis-Nirenberg degree and adapt \cite[Lemma 5.1]{BaHeMo17} to show that 
for maps satisfying the divergence identities condition INV holds.  
All these requirements lead us to try to work with the minimisation space
\begin{align*}
 \A := \{ \vec u \in H^1(\Om,\R^3) : \, & \vec u = \vec b \text{ in } \Om  \setminus \widetilde{\Om}, 
 \, \vec u \text{ is one-to-one a.e.}, \, \det D\vec u>0 \text{ a.e.},  \\ 
& \text{identity \eqref{eq:divergence_identities} holds and }  E(\vec u)<\infty  \}.
\end{align*}

Unfortunately, this space is not closed with respect to the \(H^1\) weak convergence, and one has to face a problem of lack of compactness, as shown by Conti \& De Lellis in their example \cite[Theorem 6.1] {CoDeLe03}. This non-compactness impedes the application of the direct method of calculus of variations.
As mentioned in the introduction, our strategy is the following: 
we seek a larger space $\B$  that is \emph{compact} for sequences with equibounded energy. 
On that space, we want a \emph{lower semicontinuous} energy $F$ coinciding with $E$ on \(\A\). 
By using the direct method of calculus of variations, one can then obtain that the energy $F$ admits a 
minimiser $\vec u$ on $\A$. Then, the existence problem  of a minimiser for $E$ is reduced to showing 
that $\vec u$ belongs (hopefully) to $\A$.

A natural candidate for the space $\B$ would be 
\begin{equation*}
\overline{\A}:=\{ \vec u \in H^1(\Om,\R^3): \exists \ (\vec u_n)_n \subset \A \text{ with } \sup_n E(\vec u_n)<\infty \text{ and } \vec u_n \rightharpoonup \vec u \text{ in }  H^1\} .
\end{equation*}
However, we prefer to work on an explicit space. Because of that we introduce a larger space $\B$ containing $\overline{\A}$. 
Our choice of the family $\B$ and the energy $F$ is driven by the following two facts
(Lemma \ref{le:two_facts}). First, the geometric image of a map \(\vec u\) in $\A$, defined in 
Definition \ref{def:geometric_image} and which can be thought of as \(\vec u(\Om)\), can be shown 
to be equal to ${\Omega}_{\vec b}$ in a measure theoretic sense (i.e., none of the space enclosed 
by $\vec u(\partial \Omega)$ is left void and no part of the body is mapped to the region exterior to that surface).
Second, the inverse of a map in $\A$ belongs to $W^{1,1}(\Om_{\vec b},\R^3)$.
However, this last condition is not stable: as shown by Conti \& De Lellis in their example, the weak $H^1$ limit 
of a sequence in $\A$ can have a limit with an inverse that is not in $W^{1,1}$ but only in $BV$.
This motivates us to define
\begin{align*}%\label{eq:defB}
\B:= \{\vec u \in H^1(\Om,\R^3) : \, & \vec u = \vec b \text{ in } \Om  \setminus \widetilde{\Om}, 
\, \vec u \text{ is one-to-one a.e.}, \, \det D \vec u\neq 0 \text{ a.e.}, \\
& \Om_{\vec b} = \imG(\vec u,\Om) \text{ a.e.}, \,
\vec u^{-1}\in BV(\Om_{\vec b},\R^3), \, \text{ and }  E(\vec u)<\infty  \},
\end{align*}
where $\imG(\vec u, \Om)$ is the geometric image defined in Definition \ref{def:geometric_image}. As explained before, the inclusion $\A\subset\B$ holds.
Then, we extend $E$ on $\B$ by defining
\begin{equation}\label{eq:F}
F(\vec u):= E(\vec u)+2 \| D^s \vec u^{-1} \|,
\end{equation}
for \(\vec u \in \B\), where, with a slight abuse of notation, \(E(\vec u)=\int_{\Om}|D \vec u|^2+H(|\det D\vec u|)\) for \(\vec u\) in \(\B\). We observe that in the space \(\B\) only the condition \(\det D\vec u\neq 0\) a.e.\ is required. This is because we were unable to show that the condition \(\det D\vec u>0\) a.e.\ passes to the limit for the weak \(H^1\) convergence of sequences in \(\A\) with equibounded \(F\) energy. We refer to \cite{Hencl_Onninen_2018} for such a result in the framework of Sobolev homeomorphisms.
Here $D^s \vec u^{-1}$ is the singular part of the distributional gradient of the inverse, 
$|D^s \vec u^{-1}|$ is the total variation of $D^s \vec u^{-1}$ (which is itself a positive Radon measure), and $\|D^s \vec u^{-1}\|$ is the norm of the measure $D^s \vec u^{-1}$, so that 
$\|D^s \vec u^{-1}\| = |D^s \vec u^{-1}| (\Om_{\vec b})$.

The definition of \(F\) is inspired by our previous works \cite{BaHEMoRo_23PUB} where we have proved that \(F\) admits a minimiser among axially symmetric maps belonging to \(\B\cap \{\vec u : \det D\vec u>0 \text{ a.e.} \}\).
In the present paper we extend this result to maps without any symmetry, but without the conclusion that in the limit \(\det D\vec u>0\) a.e.
Another feature of the energy \(F\) is that \(\|D^s \vec u^{-1}\|\) has an expression resembling the notion of minimal connections that was introduced by Brezis-Coron-Lieb in \cite{BrCoLi86} and which also appears in the relaxed energy for harmonic maps; cf.\ \cite{BeBrCo90}. We refer to \cite[Theorem 1.3]{BaHEMoRo_24PUB} for more on this expression. 

Our main theorem is the compactness of the class $\B$ and the lower semicontinuity of the functional $F$ 
with respect to the weak convergence in $H^1$ for maps in $\B$. 
This provides the existence of minimisers for the energy $F$ in $\B$ via the direct method of calculus of variations.

\begin{theorem}\label{main theorem}
Let $\{\vec u_j\}_j$ be a sequence in $\B$ such that $\{F(\vec u_j)\}_j$ is equibounded.
Then there exists $\vec u\in\B$ such that, up to a subsequence,
$\vec u_j \rightharpoonup \vec u$ in $H^1(\Om,\R^3)$ and 
\begin{equation*}
\liminf_{j\to\infty} F(\vec u_j)\geq F(\vec u).
\end{equation*} 
In particular, the energy $F$ has a minimiser in \(\B\). Moreover, $\overline{\A}\subset \B$.
\end{theorem}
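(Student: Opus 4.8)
The plan is to apply the direct method jointly to the deformations \(\vec u_j\) and to their inverses \(\vec v_j:=\vec u_j^{-1}\in BV(\Om_{\vec b},\R^3)\), and then to recognise the two weak limits as one map of the class \(\B\) together with its inverse.
\emph{Compactness.} From \(F(\vec u_j)\le C\) we get \(E(\vec u_j)\le C\) and \(\|D^s\vec u_j^{-1}\|\le C\). The boundary condition \(\vec u_j=\vec b\) on \(\Om\setminus\widetilde\Om\) and the Poincar\'e inequality turn the bound on \(\int_\Om|D\vec u_j|^2\) into a bound on \(\|\vec u_j\|_{H^1}\); hence, up to a subsequence, \(\vec u_j\rightharpoonup\vec u\) in \(H^1(\Om,\R^3)\), \(\vec u_j\to\vec u\) in \(L^2\) and a.e., and \(\vec u=\vec b\) on \(\Om\setminus\widetilde\Om\). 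For the inverses, the area formula gives \(\int_{\Om_{\vec b}}|D^a\vec v_j|=\int_\Om|\cof D\vec u_j|\le\int_\Om|D\vec u_j|^2\le C\), while \(\|D^s\vec v_j\|\le C/2\) and \(\|\vec v_j\|_{L^\infty}\) is bounded because the \(\vec v_j\) take values in \(\overline\Om\); thus \(\{\vec v_j\}\) is bounded in \(BV(\Om_{\vec b},\R^3)\) and, after a further extraction, \(\vec v_j\to\vec v\) in \(L^1(\Om_{\vec b})\), \(\vec v_j\to\vec v\) a.e., and \(D\vec v_j\weakcs D\vec v\).

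\emph{Membership in \(\B\).} This is where the results recalled before the theorem — the notion of geometric image, the consequences of the divergence identities, and the link between the distributional determinant, the geometric image and the singular part of the inverse — enter. I would first pass to the limit in the relations \(\vec u_j\circ\vec v_j=\mathrm{id}_{\Om_{\vec b}}\), \(\vec v_j\circ\vec u_j=\mathrm{id}_\Om\) and in the change-of-variables identities for \((\vec u_j,\vec v_j)\), using the weak continuity of the \(2\times2\) minors (each entry of \(\cof D\vec u_j\) is a divergence of a product of a strongly- and a weakly-\(L^2\)-convergent factor, so \(\cof D\vec u_j\to\cof D\vec u\) in \(\D'\)) and the ensuing convergence of the distributional determinants; this yields that \(\vec u\) is one-to-one a.e.\ and that \(\vec v=\vec u^{-1}\) a.e. Next, \(\imG(\vec u,\Om)=\Om_{\vec b}\) a.e.\ follows from \(\imG(\vec u_j,\Om)=\Om_{\vec b}\) and the stability of the geometric image under weak \(H^1\) convergence with equibounded energy. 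Finally, since \(H\) is superlinear, \(\{\det D\vec u_j\}\) is equi-integrable (de la Vall\'ee--Poussin), hence converges weakly in \(L^1\) to a limit that the previous identification forces to be \(\det D\vec u\); then convexity of \(H\) gives \(\int_\Om H(|\det D\vec u|)\le\liminf_j\int_\Om H(|\det D\vec u_j|)\le C\), and \(\lim_{s\to0}H(s)=\infty\) rules out \(\det D\vec u=0\) on a set of positive measure. Altogether \(\vec u\in\B\), \(E(\vec u)<\infty\), and \(\vec v=\vec u^{-1}\).

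\emph{Lower semicontinuity.} The term \(\int_\Om|D\vec u|^2\) is weakly lower semicontinuous, and the convexity argument above already gives \(\int_\Om H(|\det D\vec u|)\le\liminf_j\int_\Om H(|\det D\vec u_j|)\). The clean ingredient for the remaining term is lower semicontinuity of the total variation of the inverse, \(\|D\vec v\|(\Om_{\vec b})\le\liminf_j\|D\vec v_j\|(\Om_{\vec b})\), which by the decomposition of the previous step reads
\[
\int_\Om|\cof D\vec u| + \|D^s\vec u^{-1}\|\ \le\ \liminf_j\Big(\int_\Om|\cof D\vec u_j| + \|D^s\vec u_j^{-1}\|\Big).
\]
However, \(\int_\Om|\cof D\vec u_j|\) appears on the right but not in \(F\), so the extra mass carried by any concentration of \(|\cof D\vec u_j|\) must be accounted for. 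To close the argument I would pass to a subsequence along which \(F(\vec u_j)\to\liminf\) and all of \(|D\vec u_j|^2\mathcal{L}^3\), \(|\cof D\vec u_j|\mathcal{L}^3\), \(\det D\vec u_j\,\mathcal{L}^3\) and \(D^s\vec u_j^{-1}\) converge weakly-\(*\), then split \(\Om\) (resp.\ \(\Om_{\vec b}\)) into a diffuse part, where pointwise convexity and the identification of the determinant give lower semicontinuity of \(\int_\Om|D\vec u|^2 + \int_\Om H(|\det D\vec u|)\) with slack, and a concentrated part, where a blow-up analysis shows that the mass created in the limit (measured by \(2\|D^s\vec u^{-1}\|\) there) does not exceed the energy already spent by the \(\vec u_j\), the relevant ingredient being the pointwise inequality \(|\cof D\vec u_j|\le|D\vec u_j|^2\); this is precisely where the factor \(2\) and the choice of penalisation in \(F\), together with the fine-structure results, are needed.

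\emph{Conclusion and main difficulty.} Applying the three steps above to a minimising sequence of \(F\) over \(\B\) produces a minimiser. For \(\overline\A\subset\B\): given \(\vec u\in\overline\A\) with \(\A\ni\vec u_n\rightharpoonup\vec u\) in \(H^1\) and \(\sup_nE(\vec u_n)<\infty\), each \(\vec u_n^{-1}\in W^{1,1}\) so \(D^s\vec u_n^{-1}=0\) and \(F(\vec u_n)=E(\vec u_n)\le C\); the compactness and membership steps then place the (unique) weak limit \(\vec u\) in \(\B\). The routine part is the compactness; the hard part is the ``Membership in \(\B\)'' step — above all the identification \(\vec v=\vec u^{-1}\) together with a.e.\ injectivity, the equality \(\imG(\vec u,\Om)=\Om_{\vec b}\), and \(\det D\vec u\neq0\) a.e. — and the concentration analysis closing the lower semicontinuity, both of which rest on the structural results relating the distributional determinant, the geometric image and \(D^s\vec u^{-1}\).
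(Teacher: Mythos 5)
Your compactness step is correct and essentially matches the paper's. Your overall architecture (direct method on the pair $(\vec u_j,\vec v_j)$, identification of the limit inverse, lower semicontinuity of total variation on the singular set) is the right one. But there are two genuine gaps.

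\textbf{Membership in $\B$.} Your appeal to ``weak continuity of the $2\times 2$ minors'' and ``the ensuing convergence of the distributional determinants'' does not do the work you need. For $H^1$ maps in $3$D, $\cof D\vec u_j\to\cof D\vec u$ only in $\D'$; the cofactors are bounded in $L^1$ but are \emph{not} equi-integrable in general (only $\det D\vec u_j$ is, via $H$ and De la Vall\'ee Poussin), and the distributional determinant of the limit need not be recovered from the minors. The paper's Proposition~\ref{pr:limituj} takes a different route: it extracts $\theta\in L^1(\Om)$ with $|\det D\vec u_j|\rightharpoonup\theta$, shows $\theta>0$ a.e., and then passes to the limit in the area formula $\int_\Om\varphi\,\psi(\vec u_j)|\det D\vec u_j|=\int_{\Om_{\vec b}}\varphi(\vec v_j)\psi$, from which the pointwise sum formula over preimages delivers, in one stroke, injectivity a.e., $\theta=|\det D\vec u|$ (hence $\det D\vec u\neq0$ a.e.), $\imG(\vec u,\Om)=\Om_{\vec b}$, and $\vec v=\vec u^{-1}$. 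Your ``pass to the limit in $\vec v_j\circ\vec u_j=\mathrm{id}$'' idea is in the right spirit, but the mechanism you propose (minor continuity) is neither sufficient nor what the paper uses.

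\textbf{Lower semicontinuity.} Here the gap is sharper: the pointwise inequality $|\cof D\vec u_j|\le|D\vec u_j|^2$ (sharp form $|\vec A|^2\ge\sqrt3\,|\cof\vec A|$, with optimal constant $\sqrt3$, not $2$) cannot produce the factor $2$ in front of $\|D^s\vec u^{-1}\|$. Any concentration argument built on that inequality loses a factor $2/\sqrt3$. The paper's decisive move, which your proposal does not contain, is to change variables \emph{first}, writing $\int_\Om|D\vec u_j|^2=\hat E(\vec u_j^{-1})$ with $\hat E(\vec w)=\int_{\Om_{\vec b}}|\cof\nabla\vec w|^2/|\det\nabla\vec w|$, and then to invoke the matrix inequality
\[
\frac{|\cof\vec B|^2}{\det\vec B}\ \ge\ 2\,|\vec B|-2\max\{1,\det\vec B\}\qquad(\vec B\in\R^{3\times3}_+),
\]
which has the constant $2$ built in. Applying it on a thin open neighbourhood $V_\e$ of the singular set of $D\vec u^{-1}$, using equi-integrability of $|\det\nabla\vec u_j^{-1}|$ to control the correction term, one gets $\hat E(\vec u_j^{-1})+2\|D^s\vec u_j^{-1}\|\ge\ldots+2|D\vec u_j^{-1}|(V_\e)-4\e$, and $BV$ lower semicontinuity of $|D\cdot|(V_\e)$ closes the argument; on $\Om_{\vec b}\setminus V_\e$ one uses convexity of $(\vec M,b)\mapsto|\vec M|^2/b$ and the weak $L^1$ convergence of $\cof\nabla\vec u_j^{-1}$ and $|\det\nabla\vec u_j^{-1}|$ (Proposition~\ref{pr:convergenza det e cof}). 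Without this change of variables and the inequality on $\vec B^{-1}$, the ``blow-up on the concentrated part'' you sketch cannot reach the sharp constant.
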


\smallskip

We remark that, by definition of the relaxed energy, we have \( F(\vec u)\leq E_{\rel}(\vec u)\) for every \(\vec u\) in $\overline{\A}$. 
Here the relaxed energy is defined abstractly by  
\begin{equation*}
 E_{\rel}(\vec u):= \inf \{ \liminf_{j\rightarrow \infty} E(\vec u_j) : \{ \vec u_j \}_{j} \subset \A \text{ and } \vec u_j \rightharpoonup \vec u \text{ in } H^1 (\Om, \R^3) \}.
\end{equation*}
It is desirable that $F$ coincides with the relaxation of $E$, 
in order to get, possibly, a negative result: if none of the minimisers of the relaxed energy belong to $\A$, then $E$ has no minimisers in $\A$.
It is important to mention that the factor $2$ in formula \eqref{eq:F} appearing in front of $\| D^s \vec u^{-1} \|$ is sharp, as shown in \cite{BaHEMoRo_24PUB}:
there exists a map $\vec u$ in $\B\setminus\A$ (the nasty one provided by Conti \& De Lellis) and a sequence $\{\vec u_j\}_j$ in $\A$ 
such that  $\lim_{j\rightarrow \infty} E(\vec u_j) = F(\vec u)$. However, we are not able to prove that \(F\) coincides with the relaxed energy \(E_{\rel}\) at the moment.

A final remark is that we focus mainly on the Dirichlet part of the neo-Hookean energy, i.e., on $|D \vec u|^2$.
But some recent results in \cite{Dolezalova_Hencl_Maly_2023} seem to indicate that if the convex function \(H\) satisfies stronger coercivity properties then the compactness of the minimisation space could be restored.

The paper is organised as follows. We start in Section \ref{sec:preliminaries} by recalling some definitions and preliminary results. In Section \ref{sec:compactness} we prove the compactness of sequences of maps in \(\B\) with a uniform bound on the neo-Hookean energy and on the $BV$ norm of their inverses. Section \ref{sec:lower_semicontinuity} is devoted to the proofs of the lower semicontinuity of \(F\) in \(\B\) and of Theorem \ref{main theorem}.

\subsection{Acknowledgements } 

D.~Henao was supported by FONDECYT grant N.~1231401 and by
Center for Mathematical Modeling, FB210005, Basal ANID Chile.

C. Mora-Corral has been supported by the Spanish Agencia Estatal de Investigaci\'on through projects  PID2021-124195NB-C32, the Severo Ochoa Programme CEX2019-000904-S, the ERC Advanced Grant 834728 and by the Madrid Government (Comunidad de Madrid, Spain) under the multiannual Agreement with UAM in the line for the Excellence of the University Research Staff in the context of the V PRICIT (Regional Programme of Research and Technological Innovation).

The research of R. Rodiac is part of the project No.\ 2021/43/P/ST1/01501 co-funded by the National Science Centre and the European Union Framework Programme for Research and Innovation Horizon 2020 under the Marie Skłodowska-Curie grant agreement No.\ 945339. For the purpose of Open Access, the author has applied a CC-BY public copyright licence to any Author Accepted Manuscript (AAM) version arising from this submission.  

%%%%%%%%%%%%%%%%%%%%%%%%%%%%%%%%%%%%

\section{Notation and preliminaries}\label{sec:preliminaries}

Let \(U\) be an open set of \(\R^3\). For a vectorial map \(\vec u: U \rightarrow \R^3\) we denote by \(D \vec u\) its distributional Jacobian matrix. When \(\vec u\) is in \(BV(U,\R^3)\) we let \( D \vec u=D^a \vec u+D^s \vec u=D^a \vec u+D^j\vec u+D^c\vec u\) be the standard decomposition of $D \vec u$, where \(D^a \vec u\) denotes the absolutely continuous part of \( D \vec u\) with respect to the Lebesgue measure, and \(D^s \vec u\) denotes its orthogonal part. It is itself divided into the jump part \( D^j\vec u\) and the Cantor part \(D^c \vec u\). 
We will also use the notion of approximate differentiability (see, e.g., \cite[Section 3.1.2]{Federer69}, \cite[Definition 2.3]{MuSp95} or \cite[Section\ 2.3]{HeMo12}); if \(\vec u: U \rightarrow \R^3\) is approximately differentiable we denote by \(\nabla \vec u\) its approximate differential.
Due to the Calder\'on-Zygmund theorem, every \(\vec u\in BV(\Om,\R^3)\) is approximately differentiable a.e.\ and \(D^a \vec u=\nabla\vec u \, \mc{L}^3 \). In particular, with a small abuse of notation, for Sobolev maps \(D \vec u = \nabla \vec u \) a.e.
The same notation applies to a scalar function $\phi: U \to \R$.

The Lebesgue measure of a measurable set \(A\subset \R^3\) is denoted by \(|A|\). We say that two sets \(A,B\) are equal a.e.\ and we write \(A=B\) a.e.\ if \(|A\setminus B|=|B\setminus A|=0\). Given a measurable set \(A \subset \R^3\) and a point \(\vec x \in \R^3\), we define the density of \(A\) at \(\vec x\) by
\begin{equation}\label{eq:densities}
D(A,\vec x):=\lim_{r \to 0} \frac{|B(\vec x,r)\cap A|}{|B(\vec x,r)|}
\end{equation}
when the limit exists.
Here $B(\vec x,r)$ is the open ball centred at $\vec x$ of radius $r$.

The set of \(3\times 3\) matrices with coefficients in \(\R\) is denoted by \(\R^{3\times 3}\), while \(\R^{3\times 3}_+\) is its subset of matrices with positive determinant.
The adjoint and cofactor of \( \vec A\in \R^{3\times 3}\) are denoted by \(\adj \vec A\) and \( \cof \vec A \), respectively, so that \(\vec A(\adj \vec A)=(\adj \vec A) \vec A=(\det \vec A) \operatorname{Id} \) and \( \cof \vec A =(\adj \vec A)^T\).

We recall the area formula of Federer (\cite[Proposition  2.6]{MuSp95} and \cite[Theorem 3.2.5 and Theorem 3.2.3]{Federer69}). We will use the notation $\mc{N}(\vec u, A,\vec y)$ for the number of preimages of a point $\vec y$ in the set $A$ under~$\vec u$. In this section, \(\Om\) is any bounded domain of \(\R^3\).

\begin{proposition}\label{prop:area-formula}
Let $\vec u$ be approximately differentiable a.e.\ in \(\Om\), and denote the set of approximate differentiability points of $\vec u$ by $\Om_d$. Then, for any measurable set $A\subset \Om$ and any measurable function $\varphi:\R^3 \rightarrow \R$,
\begin{equation*}
\int_A (\varphi \circ \vec u) \left| \det \nabla \vec u \right| \dd \vec x =\int_{\R^3} \varphi(\vec y) \, \mc{N}(\vec u,\Om_d\cap A,\vec y) \, \dd \vec y
\end{equation*}
whenever either integral exists. Moreover, if a map $\psi:A\rightarrow \R$ is measurable and $\bar{\psi}:\vec u(\Om_d\cap A) \rightarrow \R$ is given by
\begin{equation*}
\bar{\psi}(\vec y):= \sum_{\vec x \in \Om_d \cap A, \ \vec u(\vec x)
= \vec y} \psi(\vec x) , \quad \vec y \in \vec u(\Om_d\cap A)
\end{equation*}
then $\bar{\psi}$ is measurable and
\begin{equation}\label{eq:area-formula}
\int_A\psi(\varphi\circ \vec u) \left| \det \nabla \vec u \right| \dd \vec x= \int_{\vec u(\Om_d\cap A)} \bar{\psi} \, \varphi \, \dd \vec y,
\end{equation}
whenever the integral on the left-hand side of \eqref{eq:area-formula} exists.
\end{proposition}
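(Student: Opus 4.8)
\textbf{Proof plan for Proposition~\ref{prop:area-formula}.}
The strategy is to reduce the statement to the classical area formula for Lipschitz maps, which is available in \cite[Theorem 3.2.5 and Theorem 3.2.3]{Federer69}, by exhausting $\Om_d$ up to a null set by countably many pieces on which $\vec u$ agrees with a Lipschitz map. First I would invoke the Federer--Whitney--McShane style extension result (see, e.g., \cite[Section 3.1.2]{Federer69}): since $\vec u$ is approximately differentiable a.e.\ on $\Om$, there exist a Lebesgue null set $N\subset\Om$ and countably many measurable sets $E_k$ with $\Om_d\setminus N=\bigcup_k E_k$ and Lipschitz maps $\vec u_k:\R^3\to\R^3$ such that $\vec u=\vec u_k$ and $\nabla\vec u=D\vec u_k$ on $E_k$ (and, at Lebesgue density points of $E_k$, the approximate differential coincides with the classical one of $\vec u_k$). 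Replacing $E_k$ by $E_k\setminus\bigcup_{j<k}E_j$ we may assume the $E_k$ are pairwise disjoint. Since changing $\vec u$ on a null set does not affect either side of the asserted identities (the left-hand sides are integrals against $|\det\nabla\vec u|\,\dd\vec x$, which we may restrict to $\Om_d$, and the right-hand sides only involve $\vec u$ on $\Om_d\cap A$), it suffices to prove both formulas with $\Om_d$ replaced by $\bigcup_k E_k$.

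Next I would prove the first identity. Fix a measurable $A\subset\Om$ and a measurable $\varphi:\R^3\to\R$; by monotone convergence and splitting into positive and negative parts it is enough to treat $\varphi\ge 0$. On each piece $E_k\cap A$ the classical area formula for the Lipschitz map $\vec u_k$ gives
\begin{equation*}
\int_{E_k\cap A}(\varphi\circ\vec u)\,|\det\nabla\vec u|\,\dd\vec x
=\int_{E_k\cap A}(\varphi\circ\vec u_k)\,|\det D\vec u_k|\,\dd\vec x
=\int_{\R^3}\varphi(\vec y)\,\mc N(\vec u_k,E_k\cap A,\vec y)\,\dd\vec y,
\end{equation*}
and since $\vec u=\vec u_k$ on $E_k$ we have $\mc N(\vec u_k,E_k\cap A,\vec y)=\mc N(\vec u,E_k\cap A,\vec y)$. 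Summing over $k$ (Tonelli, all terms nonnegative) and using that the $E_k$ partition $\Om_d$ up to a null set, the counting functions add up, $\sum_k\mc N(\vec u,E_k\cap A,\vec y)=\mc N(\vec u,\Om_d\cap A,\vec y)$, which yields the first formula. The general (signed, possibly non-integrable) case follows by the usual truncation and the hypothesis that one of the two integrals exists.

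For the second, ``weighted'' formula, fix a measurable $\psi:A\to\R$. Measurability of $\bar\psi$ on the $\sigma$-finite set $\vec u(\Om_d\cap A)$ (which has $\sigma$-finite $\mc L^3$-measure, e.g.\ because $\vec u(E_k\cap A)$ is the image of a bounded set under a Lipschitz map, hence of finite measure once $A$ is intersected with a ball, and $A$ is $\sigma$-finite) is a standard consequence of the measurable-selection/area-formula machinery for Lipschitz maps applied on each $E_k$; alternatively one notes $\bar\psi=\sum_k\bar\psi_k$ where $\bar\psi_k(\vec y)=\sum_{\vec x\in E_k\cap A,\ \vec u(\vec x)=\vec y}\psi(\vec x)$ is measurable by \cite[Theorem 3.2.3]{Federer69} applied to $\vec u_k$. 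For the identity, assume first $\psi\ge 0$ (and $\varphi\ge 0$); on each $E_k$ the Lipschitz area formula \eqref{eq:area-formula}-type statement gives
\begin{equation*}
\int_{E_k\cap A}\psi\,(\varphi\circ\vec u)\,|\det\nabla\vec u|\,\dd\vec x
=\int_{\vec u(E_k\cap A)}\bar\psi_k\,\varphi\,\dd\vec y.
\end{equation*}
Here one uses crucially that $\vec u_k$ is injective on a full-measure subset of $E_k$ in the sense that the fibre sum over $E_k\cap A$ is exactly what the Lipschitz area formula produces; more precisely, even without injectivity, the correct Lipschitz statement sums $\psi$ over the fibre, which is precisely $\bar\psi_k$. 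Summing over $k$ and using $\sum_k\bar\psi_k\mathbf 1_{\vec u(E_k\cap A)}=\bar\psi$ a.e.\ on $\vec u(\Om_d\cap A)$ (the sets $\vec u(E_k\cap A)$ need not be disjoint, but at $\mc L^3$-a.e.\ point $\vec y$ only finitely many contribute and their $\psi$-values add to $\bar\psi(\vec y)$) gives \eqref{eq:area-formula} for $\psi,\varphi\ge 0$. The general case follows by decomposing $\psi=\psi^+-\psi^-$, $\varphi=\varphi^+-\varphi^-$ and invoking the hypothesis that the left-hand side of \eqref{eq:area-formula} exists to justify the recombination.

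\textbf{Main obstacle.} The delicate point is not the reduction to Lipschitz pieces, which is routine, but the bookkeeping in the second formula: one must ensure that the overlaps of the images $\vec u(E_k\cap A)$ are $\mc L^3$-negligible where it matters, that the fibre sums $\bar\psi_k$ genuinely add up to $\bar\psi$ a.e., and that the (possibly infinite-fibre) counting is compatible with Tonelli's theorem. Handling the measurability of $\bar\psi$ and the signed/non-integrable cases cleanly — so that the statement holds ``whenever the integral on the left-hand side exists'' — also requires care, but all of this is exactly what \cite[Theorem 3.2.3 and Theorem 3.2.5]{Federer69} were designed to provide.
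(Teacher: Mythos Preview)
The paper does not prove this proposition; it is stated as a known result with citations to \cite[Proposition~2.6]{MuSp95} and \cite[Theorems~3.2.3 and~3.2.5]{Federer69}. Your proposal is correct and is precisely the standard argument underlying those references: cover $\Om_d$ up to a null set by countably many disjoint pieces on which $\vec u$ coincides with a Lipschitz map (Federer~3.1.8), apply the Lipschitz area formula on each piece, and sum via Tonelli. So there is nothing to compare --- your route is essentially the one encoded in the citations.
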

We observe that the previous proposition implies that, for \( \vec u\) approximately differentiable a.e., 
\begin{equation}\label{eq:null_set_property}
|\vec u (N\cap \Om_d)|=0 \quad \text{ whenever } |N|=0.
\end{equation}

We will need to work with a set of points satisfying more properties than just approximate differentiability.

\begin{definition}\label{def:Om0}
Let $\vec u$ be approximately differentiable a.e.\ and such that $\det D \vec u\neq 0$ a.e. We define $\Om_0$ as the set of $\vec x\in \Om$ for which the following are satisfied:
\begin{enumerate}
\item the approximate differential of $\vec u$ at $\vec x$ exists and equals $D \vec u(\vec x)$,
\item there exist $\vec w\in C^1(\R^3,\R^3)$ and a compact set $K \subset \Om$ of density $1$ at $\vec x$ such that $\vec u|_{K}=\vec w|_{K}$ and $\nabla \vec u|_{K}=D \vec w|_{K}$,
\item $\det \nabla \vec u(\vec x)\neq 0$.
\end{enumerate}
\end{definition}
It can be seen from \cite[Theorem 3.1.8]{Federer69}, 
Rademacher's Theorem and Whitney's Theorem that \( \Om_0\) is a set of full Lebesgue measure in \( \Om \), i.e., \( |\Om \setminus \Om_0|=0\). 

\begin{definition}\label{def:geometric_image}
For any measurable set $A$ of $\Om$, the geometric image of $A$ under an a.e.\ approximately differentiable map $\vec u$ is defined by  
\begin{equation*}
\imG(\vec u,A) : =\vec u(A\cap \Om_0) ,
\end{equation*}
with \(\Om_0\) as in Definition \ref{def:Om0}.
\end{definition}

We will need the following result.
\begin{lemma}\label{lem:Muller_Spector}(\cite[Lemma 2.5]{MuSp95})
Let \(\vec u:\Om \rightarrow \R^3\) be approximately differentiable in almost all \(\Om\) and suppose that \(\det \nabla\vec u(\vec x)\neq 0\) for almost every \(\vec x \in \Om\). Let \(\Om_0\)  be as in Definition \ref{def:Om0}. Then for every \(\vec x\in \Om_0\) and every measurable set \(A\subset \Om\),
\begin{equation*}
D(\imG(\vec u,A),\vec u(\vec x))=1 \text{ whenever } D(A,\vec x)=1,
\end{equation*}
where the density is defined in \eqref{eq:densities}.
\end{lemma}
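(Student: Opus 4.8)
The plan is to use the three defining properties of the set $\Om_0$ to localise $\vec u$ near $\vec x$ by a genuine $C^1$ diffeomorphism, and then to invoke the elementary fact that such a map preserves Lebesgue density points. First I would fix $\vec x\in\Om_0$ and a measurable set $A\subset\Om$ with $D(A,\vec x)=1$. By Definition \ref{def:Om0}(2) there are $\vec w\in C^1(\R^3,\R^3)$ and a compact set $K\subset\Om$ with $D(K,\vec x)=1$, $\vec u|_K=\vec w|_K$, and $\nabla\vec u|_K=D\vec w|_K$. Since $K$ is closed and has density $1$ at $\vec x$, necessarily $\vec x\in K$, whence $\vec w(\vec x)=\vec u(\vec x)=:\vec y_0$ and $D\vec w(\vec x)=\nabla\vec u(\vec x)$, which is invertible by Definition \ref{def:Om0}(3). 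By the $C^1$ inverse function theorem and continuity of $D\vec w$, I would then choose $r_0>0$ so small that $\vec w$ is injective on $B(\vec x,r_0)$ and maps it onto an open neighbourhood of $\vec y_0$, that $\vec w$ is Lipschitz on $B(\vec x,r_0)$ with some constant $L$, and that $B(\vec y_0,cr)\subset\vec w(B(\vec x,r))$ for all $0<r\le r_0$ and some fixed $c>0$; this last inclusion is the quantitative content of the inverse function theorem at $\vec x$.

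Next I would pass to a subset of $A$ that is well behaved. Put $S:=A\cap K\cap\Om_0\cap B(\vec x,r_0)$. Since $|\Om\setminus\Om_0|=0$, intersecting with $\Om_0$ leaves Lebesgue measure unchanged, and since $D(A,\vec x)=D(K,\vec x)=1$, for $0<r\le r_0$ one has $|B(\vec x,r)\setminus S|\le|B(\vec x,r)\setminus A|+|B(\vec x,r)\setminus K|=o(r^3)$; hence $D(S,\vec x)=1$. Because $S\subset K$ we have $\vec u(S)=\vec w(S)$, and because $S\subset A\cap\Om_0$ we have $\vec w(S)=\vec u(S)\subset\vec u(A\cap\Om_0)=\imG(\vec u,A)$. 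Since densities are monotone in the set and bounded above by $1$, a squeeze argument reduces the claim to proving $D(\vec w(S),\vec y_0)=1$.

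For the density-preservation step I would set $T:=B(\vec x,r_0)\setminus S$, so that $S\cap B(\vec x,r)$ and $T\cap B(\vec x,r)$ partition $B(\vec x,r)$ for $0<r\le r_0$, and $|T\cap B(\vec x,r)|=o(r^3)$ by the previous paragraph. Fix such an $r$ and let $\vec y\in B(\vec y_0,cr)\setminus\vec w(S)$. By the inclusion $B(\vec y_0,cr)\subset\vec w(B(\vec x,r))$ and the injectivity of $\vec w$ on $B(\vec x,r_0)$, there is a unique $\vec z\in B(\vec x,r)$ with $\vec w(\vec z)=\vec y$, and $\vec z\notin S$ (otherwise $\vec y\in\vec w(S)$), so $\vec z\in T\cap B(\vec x,r)$. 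Hence $B(\vec y_0,cr)\setminus\vec w(S)\subset\vec w(T\cap B(\vec x,r))$, and the Lipschitz bound gives $|B(\vec y_0,cr)\setminus\vec w(S)|\le L^3\,|T\cap B(\vec x,r)|=o(r^3)$. As $|B(\vec y_0,cr)|$ is a fixed positive multiple of $r^3$, letting $r\to0$ yields $D(\vec w(S),\vec y_0)=1$, as required.

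The only step that is not pure bookkeeping with densities is the extraction of the uniform, quantitative behaviour of $\vec w$ on a fixed ball $B(\vec x,r_0)$ — its injectivity there, the Lipschitz estimate, and the lower inclusion $B(\vec y_0,cr)\subset\vec w(B(\vec x,r))$ — but this is exactly the $C^1$ inverse function theorem applied at $\vec x$, where $D\vec w(\vec x)$ is invertible thanks to property (3) of Definition \ref{def:Om0}. One minor point to record along the way is that $\vec w(S)$ is $\mc{L}^3$-measurable, which holds because $\vec w$ is locally Lipschitz and $S$ is measurable.
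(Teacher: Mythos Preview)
The paper does not supply its own proof of this lemma; it is simply quoted from \cite[Lemma~2.5]{MuSp95}. Your argument is correct and is essentially the standard proof one finds there: pass to the $C^1$ representative $\vec w$ on the compact set $K$ of density~$1$ at $\vec x$, use the inverse function theorem at $\vec x$ (where $D\vec w(\vec x)=\nabla\vec u(\vec x)$ is invertible) to get bi-Lipschitz behaviour on a small ball, and then exploit that a bi-Lipschitz map sends density-$1$ points to density-$1$ points via the Lipschitz bound on the complement.
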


In order to define the inverse of maps which are approximately differentiable, one-to-one a.e.\ and such that \(\det \nabla\vec u\neq 0\) a.e., we first give the following lemma.

\begin{lemma}\label{lem:one-to-one}(\cite[Lemma 3]{HeMo10})
Let \(\vec u:\Om \rightarrow \R^3\) be approximately differentiable in almost all \(\Om\), one-to-one a.e., and suppose that \(\det \nabla\vec u(\vec x)\neq 0\) for a.e.\ \(\vec x\in \Om\). Let \(\Om_0\) be as in Definition \ref{def:Om0}. Then \( \vec u|_{\Om_0}\) is one-to-one.
\end{lemma}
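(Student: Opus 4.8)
The plan is to argue by contradiction in two steps. \emph{Step~1} shows that the multiplicity $\vec y\mapsto \mc N(\vec u,\Om_0,\vec y)$ is at most $1$ for almost every $\vec y\in\R^3$; \emph{Step~2} upgrades ``almost every'' to ``every'' by exploiting that, near a hypothetical double point, the $C^1$ structure built into the definition of $\Om_0$ forces a whole positive-measure set of double points. For Step~1, since $\vec u$ is one-to-one a.e.\ we may fix a Lebesgue-null set $N\subset\Om$ with $\vec u|_{\Om\setminus N}$ injective, and split $\vec u^{-1}(\vec y)\cap\Om_0$ into its parts in $\Om_0\setminus N$ and in $\Om_0\cap N$. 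The former has at most one element by injectivity of $\vec u|_{\Om\setminus N}$. Since $\Om_0\subset\Om_d$ by Definition~\ref{def:Om0}(1) and $|\Om_0\cap N|=0$, the null-set property \eqref{eq:null_set_property} gives $|\vec u(\Om_0\cap N)|=0$, so for a.e.\ $\vec y$ there is no preimage of $\vec y$ in $\Om_0\cap N$. Hence $\mc N(\vec u,\Om_0,\vec y)\le 1$ for a.e.\ $\vec y\in\R^3$.

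For Step~2, assume towards a contradiction that there are $\vec x_1\ne\vec x_2$ in $\Om_0$ with $\vec u(\vec x_1)=\vec u(\vec x_2)=:\vec y_0$. Using Definition~\ref{def:Om0}, for $i=1,2$ pick $\vec w_i\in C^1(\R^3,\R^3)$ and a compact set $K_i\subset\Om$ with $D(K_i,\vec x_i)=1$, $\vec u|_{K_i}=\vec w_i|_{K_i}$, $\nabla\vec u|_{K_i}=D\vec w_i|_{K_i}$, and $\det D\vec w_i(\vec x_i)=\det\nabla\vec u(\vec x_i)\ne 0$. By the inverse function theorem there is an open neighbourhood $V_i$ of $\vec x_i$ on which $\vec w_i$ is a $C^1$-diffeomorphism onto its image; as $\vec x_1\ne\vec x_2$ we shrink $V_1,V_2$ so that $V_1\cap V_2=\emptyset$. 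Set $A_i:=K_i\cap V_i\cap\Om_0$, so that $A_i\subset\Om_0$, $A_1\cap A_2=\emptyset$, and $\vec u|_{A_i}$ is injective (it coincides with $\vec w_i|_{V_i}$).

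Each $A_i$ is an intersection of three sets of density $1$ at $\vec x_i$ (recall $|\Om\setminus\Om_0|=0$, so $D(\Om_0,\vec x_i)=1$), hence $D(A_i,\vec x_i)=1$. Lemma~\ref{lem:Muller_Spector} then yields $D\big(\imG(\vec u,A_i),\vec y_0\big)=1$, and since $A_i\subset\Om_0$ we have $\imG(\vec u,A_i)=\vec u(A_i)$; therefore $\vec u(A_1)$ and $\vec u(A_2)$ both have density $1$ at $\vec y_0$, so $Z:=\vec u(A_1)\cap\vec u(A_2)$ has density $1$ at $\vec y_0$, and in particular $|Z|>0$. By injectivity of $\vec u|_{A_i}$, every $\vec z\in Z$ has exactly one preimage $\vec x_i(\vec z)\in A_i\subset\Om_0$, and $\vec x_1(\vec z)\ne\vec x_2(\vec z)$ because $A_1\cap A_2=\emptyset$. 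Thus $\mc N(\vec u,\Om_0,\vec z)\ge 2$ for every $\vec z$ in the positive-measure set $Z$, contradicting Step~1. Hence $\vec u|_{\Om_0}$ is one-to-one.

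The only genuinely nontrivial ingredient is the transfer of the density-$1$ property from the reference to the deformed configuration, i.e.\ that $\vec u(A_i)$ has density $1$ at $\vec y_0$; this is exactly Lemma~\ref{lem:Muller_Spector}, which is available, so the rest is bookkeeping. The two organisational points to be careful about are: intersecting the sets $K_i$ of Definition~\ref{def:Om0} with $\Om_0$, so that the double points produced in Step~2 really lie in $\Om_0$; and choosing $V_1,V_2$ disjoint, so that those double points are distinct.
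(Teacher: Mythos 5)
The paper does not give its own proof of this lemma; it simply cites \cite[Lemma 3]{HeMo10}. Your argument is a correct and self-contained reconstruction in the expected style. Step~1 (the area formula via \eqref{eq:null_set_property} applied to $\Om_0\cap N\subset\Om_d$ gives $\mc N(\vec u,\Om_0,\cdot)\le 1$ a.e.) and Step~2 (a hypothetical pair $\vec x_1\neq\vec x_2$ in $\Om_0$ with the same image, the $C^1$ charts $\vec w_i$ from Definition~\ref{def:Om0}, the inverse function theorem, and Lemma~\ref{lem:Muller_Spector} to push density $1$ forward, producing a positive-measure set with $\mc N\ge 2$) are exactly the two ingredients one needs, and they fit together without gaps. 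The bookkeeping you highlight -- intersecting $K_i\cap V_i$ with $\Om_0$ so that the double preimages lie in $\Om_0$, and shrinking $V_1,V_2$ to be disjoint so that they are genuinely distinct -- is indeed where the argument could silently go wrong, and you handle both correctly. One very minor remark: you write $\det D\vec w_i(\vec x_i)=\det\nabla\vec u(\vec x_i)$ as if $\vec x_i\in K_i$; Definition~\ref{def:Om0} does not literally require this, but since $\vec u|_{K_i}=\vec w_i|_{K_i}$ on a set of density $1$ at $\vec x_i$ and $\vec u$ is approximately differentiable there, the approximate differential of $\vec u$ at $\vec x_i$ must coincide with the classical one of $\vec w_i$, so the identity holds regardless. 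No changes needed.
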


\begin{definition}\label{def:inverses}
Let \(\vec u:\Om \rightarrow \R^3\) be approximately differentiable in almost all \(\Om\), one-to-one a.e., and suppose that \(\det D\vec u(\vec x)\neq 0\) for a.e.\ \(\vec x\in \Om\). Let \(\Om_0\) be as in Definition \ref{def:Om0}. Then we define the inverse \(\vec u^{-1}\) as the map \(\vec u^{-1}:\imG(\vec u,\Om)\rightarrow \R^3\) that sends every \(\vec y \in \imG(\vec u,\Om)\) to the only \(\vec x\in \Om_0\) such that \(\vec u(\vec x)=\vec y.\)
\end{definition}

Proposition \ref{prop:area-formula} will be used in the following form.

\begin{corollary}\label{cor:area-formula}
Let $\vec u \in \B$.
Then, for any measurable function $\varphi: \Om_{\vec b} \rightarrow \R$,
\begin{equation*}
\int_{\Om} (\varphi \circ \vec u) |\det \nabla \vec u |\, \dd \vec x =\int_{\Om_{\vec b}} \varphi \, \dd \vec y
\end{equation*}
whenever either integral exists. Moreover, if a map $\psi: \Om \rightarrow \R$ is measurable, then
\begin{equation*}
\int_{\Om} \psi(\varphi\circ \vec u) |\det \nabla \vec u |\, \dd \vec x= \int_{\Om_{\vec b}} ( \psi \circ \vec u^{-1} ) \, \varphi \, \dd \vec y ,
\end{equation*}
whenever the integral on the left-hand side exists.
\end{corollary}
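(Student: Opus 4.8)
The plan is to read off both identities directly from Federer's area formula, Proposition~\ref{prop:area-formula}, applied with $A=\Om$, once the multiplicity function $\mc{N}(\vec u,\Om_d,\cdot)$ has been identified. First I would check that every $\vec u\in\B$ meets the hypotheses: being in $H^1(\Om,\R^3)$ it is approximately differentiable a.e.\ (as a $W^{1,1}$, indeed $BV$, map), with $\nabla\vec u=D\vec u$ a.e.; by definition of $\B$ it is one-to-one a.e.\ and $\det D\vec u\neq 0$ a.e. Hence Lemma~\ref{lem:one-to-one} applies: $\vec u|_{\Om_0}$ is injective, where $\Om_0$ is the full-measure good set of Definition~\ref{def:Om0}, and the inverse $\vec u^{-1}\colon\imG(\vec u,\Om)\to\R^3$ of Definition~\ref{def:inverses} is well defined.

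The one step deserving care is the passage from the set $\Om_d$ of approximate-differentiability points appearing in Proposition~\ref{prop:area-formula} to the smaller set $\Om_0$. Since $|\Om\setminus\Om_0|=0$ we also have $|\Om_d\setminus\Om_0|=0$, so by the null-set property \eqref{eq:null_set_property} the image $\vec u(\Om_d\setminus\Om_0)$ is Lebesgue-null. Therefore, for a.e.\ $\vec y\in\R^3$, every $\vec u$-preimage of $\vec y$ lying in $\Om_d$ in fact lies in $\Om_0$, so $\mc{N}(\vec u,\Om_d,\vec y)=\mc{N}(\vec u,\Om_0,\vec y)$ for a.e.\ $\vec y$; and by injectivity of $\vec u|_{\Om_0}$ this common value is $\mathbf 1_{\vec u(\Om_0)}(\vec y)=\mathbf 1_{\imG(\vec u,\Om)}(\vec y)$. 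Because $\vec u\in\B$ we have $\imG(\vec u,\Om)=\Om_{\vec b}$ up to a null set, hence $\mc{N}(\vec u,\Om_d,\cdot)=\mathbf 1_{\Om_{\vec b}}$ a.e. Plugging this into Proposition~\ref{prop:area-formula} (extending $\varphi$ by $0$ outside $\Om_{\vec b}$, and noting that $\varphi\circ\vec u$ is then defined a.e.\ on $\Om$ since $\imG(\vec u,\Om)=\Om_{\vec b}$ a.e.\ and $\det\nabla\vec u\neq0$ a.e.) gives $\int_{\Om}(\varphi\circ\vec u)|\det\nabla\vec u|\,\dd\vec x=\int_{\Om_{\vec b}}\varphi\,\dd\vec y$.

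For the weighted identity I would apply the second part of Proposition~\ref{prop:area-formula} with $A=\Om$. There $\bar\psi(\vec y)=\sum_{\vec x\in\Om_d,\ \vec u(\vec x)=\vec y}\psi(\vec x)$ is defined on $\vec u(\Om_d)$; by the same argument as above, for a.e.\ $\vec y$ the sum has a single term, coming from the unique preimage in $\Om_0$, namely $\bar\psi(\vec y)=\psi(\vec u^{-1}(\vec y))$ for a.e.\ $\vec y\in\imG(\vec u,\Om)=\Om_{\vec b}$, while $\vec u(\Om_d)=\Om_{\vec b}$ a.e. Substituting into \eqref{eq:area-formula} yields $\int_{\Om}\psi(\varphi\circ\vec u)|\det\nabla\vec u|\,\dd\vec x=\int_{\Om_{\vec b}}(\psi\circ\vec u^{-1})\,\varphi\,\dd\vec y$. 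I do not anticipate any real obstacle: the corollary is essentially bookkeeping on top of the area formula, the only genuinely substantive point being the reduction from $\Om_d$ to $\Om_0$, which is exactly what \eqref{eq:null_set_property} is for.
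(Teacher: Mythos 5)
Your proposal is correct and follows essentially the same route as the paper: apply Federer's area formula with the extension $\varphi_0$ of $\varphi$ by zero, identify the multiplicity $\mc{N}(\vec u,\Om_d,\cdot)$ with $\mathbf 1_{\Om_{\vec b}}$ a.e.\ via the injectivity of $\vec u|_{\Om_0}$ (Lemma~\ref{lem:one-to-one}), the null-set property \eqref{eq:null_set_property}, and the condition $\imG(\vec u,\Om)=\Om_{\vec b}$ a.e.\ from the definition of $\B$. The paper organizes the same bookkeeping slightly differently, working on the full-measure subset $\Om'=\{\vec x\in\Om_0:\det\nabla\vec u(\vec x)\neq 0,\ \vec u(\vec x)\in\Om_{\vec b}\}$ and invoking Lusin's $N^{-1}$ condition to justify that $\Om'$ has full measure, whereas you compare $\mc{N}(\vec u,\Om_d,\cdot)$ with $\mc{N}(\vec u,\Om_0,\cdot)$ directly; these are interchangeable formulations of the same argument.
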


\begin{proof}
Let us check that this is a particular case of Proposition \ref{prop:area-formula}.
Any $\vec u \in \B$, being Sobolev, is approximately differentiable a.e.
Since $\det D \vec u\neq 0$ a.e.\ and $\Om_{\vec b} = \imG(\vec u,\Om)$ a.e., the set 
\begin{equation*}
 \Om' := \{ \vec x \in \Om_0 : \det \nabla \vec u(\vec x) \neq 0 , \, \vec u (\vec x) \in \Om_{\vec b} \}
\end{equation*}
is of full measure in $\Om$; indeed, this is because $\vec u$ satisfies Lusin's $N^{-1}$ condition (i.e., the preimage by $\vec u$ of a set of measure zero has measure zero), being this a consequence of Proposition \ref{prop:area-formula} and $|\det \nabla \vec u(\vec x)|> 0$ a.e.\  (see, if necessary, \cite[Remark 2.3 (b)]{BrFrMo24}).
According to Lemma \ref{lem:one-to-one} and Definition \ref{def:inverses}, the inverse \(\vec u^{-1}:\imG(\vec u,\Om)\rightarrow \R^3\) is defined.
Denote by $\varphi_0$ the extension of $\varphi$ to $\R^3$ by zero.
By Proposition \ref{prop:area-formula},
\begin{equation*}
 \int_{\Om'} (\varphi_0 \circ \vec u) \left| \det \nabla \vec u \right| \dd \vec x 
 =\int_{\R^3} \varphi_0(\vec y) \, \mc{N}(\vec u, \Om',\vec y) \, \dd \vec y .
\end{equation*}
Now,
\begin{equation*}
\int_{\Om} (\varphi_0 \circ \vec u) \left| \det \nabla \vec u \right| \dd \vec x = \int_{\Om'} (\varphi_0 \circ \vec u) \left| \det \nabla \vec u \right| \dd \vec x 
\end{equation*}
and, by Lemma \ref{lem:one-to-one},
\begin{equation*}
 \int_{\R^3} \varphi_0(\vec y) \, \mc{N}(\vec u, \Om',\vec y) \, \dd \vec y = \int_{\Om_{\vec b}} \varphi (\vec y) \, \mc{N}(\vec u,\Om',\vec y) \, \dd \vec y  = \int_{\imG (\vec u, \Om)} \varphi (\vec y) \, \dd \vec y .
\end{equation*}
This proves the first equality of the statement.
The second one is analogous.
\end{proof}

\begin{proposition}\label{prop:relation_inverses}
Let \(\vec u\in H^1(\Om,\R^3)\) be such that \(\det D\vec u(\vec x)\neq 0\) for a.e.\ \(\vec x\in \Om\) and \(\vec u\) is one-to-one a.e. Let \(\Om_0\) be as in Definition \ref{def:Om0}. Then \(\vec u^{-1}\) is approximately differentiable at every \(\vec y \in \imG(\vec u,\Om)=\vec u(\Om_0)\) and its approximate differential satisfies 
\begin{equation}\label{eq:relation_inverses}
\nabla \vec u^{-1}\big ( \vec u(\vec x)\big ) 
 = D\vec u(\vec x) ^{-1}= \frac{\adj D \vec u (\vec x)}{\det D\vec u(\vec x)} \text{  for every } \vec x \in \Omega_0 .
\end{equation}
In particular, if we assume that \(\imG(\vec u,\Om)=\Om_{\vec b} \) a.e.\ then \(\nabla \vec u^{-1}\in L^1(\Om_{\vec b})\) with
\begin{equation}\label{eq:controlDau-1}
 \| \nabla \vec u^{-1} \|_{L^1(\Om_{\vec b})}\leq \frac{1}{\sqrt{3}}\| D\vec u\|^2_{L^2(\Om)}.
\end{equation} If we assume furthermore that \(\vec u^{-1}\in BV(\Om_{\vec b},\R^3)\) then \( D^a \vec u^{-1}(\vec u(\vec x)) = D\vec u(\vec x) ^{-1}\) for a.e.\ \(\vec x\in \Om\).
\end{proposition}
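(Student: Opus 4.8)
The plan is to prove the three assertions in order; only the first requires real work, its point being to transfer to the inverse, at the image point, the description of $\vec u$ available at points of $\Om_0$ as coinciding with a $C^1$ map on a set of density one. Fix $\vec x\in\Om_0$, put $\vec y:=\vec u(\vec x)$, and take $\vec w\in C^1(\R^3,\R^3)$ and $K\subset\Om$ of density $1$ at $\vec x$ as in Definition \ref{def:Om0}, so that $\vec u|_K=\vec w|_K$, $\nabla\vec u|_K=D\vec w|_K$, and $\det\nabla\vec u(\vec x)\ne 0$. Since $\vec u=\vec w$ on the density-one set $K$ and $\vec w\in C^1$, the map $\vec u$ is approximately differentiable at $\vec x$ with approximate differential $D\vec w(\vec x)$; by Definition \ref{def:Om0} this forces $D\vec u(\vec x)=D\vec w(\vec x)$ and $\det D\vec w(\vec x)=\det\nabla\vec u(\vec x)\ne 0$. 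By the inverse function theorem there is an open ball $V\ni\vec x$ on which $\vec w$ is a $C^1$ diffeomorphism onto a neighbourhood of $\vec y$, with $D\vec w^{-1}(\vec y)=D\vec w(\vec x)^{-1}=D\vec u(\vec x)^{-1}$. Replacing $K$ by $K\cap\overline{B(\vec x,\rho)}\cap\Om_0$ with $\overline{B(\vec x,\rho)}\subset V$ (which leaves its density at $\vec x$ equal to $1$, since $|\Om\setminus\Om_0|=0$), we may assume $K\subset V\cap\Om_0$.

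By Lemma \ref{lem:Muller_Spector}, the set $E:=\imG(\vec u,K)=\vec u(K)$ then has density $1$ at $\vec y$. On $E$ the inverse $\vec u^{-1}$ coincides with $\vec w^{-1}$: if $\vec y'=\vec u(\vec x')$ with $\vec x'\in K\subset V\cap\Om_0$, then $\vec u^{-1}(\vec y')=\vec x'$ by Lemma \ref{lem:one-to-one} and Definition \ref{def:inverses}, while $\vec w^{-1}(\vec y')=\vec w^{-1}(\vec w(\vec x'))=\vec x'$; in particular $\vec u^{-1}(\vec y)=\vec x=\vec w^{-1}(\vec y)$. Hence $\vec u^{-1}$ agrees, on a set of density $1$ at $\vec y$, with the map $\vec w^{-1}$, which is differentiable at $\vec y$; this is exactly the statement that $\vec u^{-1}$ is approximately differentiable at $\vec y$, with $\nabla\vec u^{-1}(\vec y)=D\vec w^{-1}(\vec y)=D\vec u(\vec x)^{-1}$. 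Since $D\vec u(\vec x)^{-1}=\adj D\vec u(\vec x)/\det D\vec u(\vec x)$ and $\imG(\vec u,\Om)=\vec u(\Om_0)$, this is \eqref{eq:relation_inverses}.

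For \eqref{eq:controlDau-1}, assume $\imG(\vec u,\Om)=\Om_{\vec b}$ a.e. The function $\vec y\mapsto|\nabla\vec u^{-1}(\vec y)|$ is measurable on $\Om_{\vec b}$, and, arguing as in the proof of Corollary \ref{cor:area-formula} (whose hypotheses hold here: $\vec u$ is Sobolev, one-to-one a.e., $\det D\vec u\ne 0$ a.e., $\imG(\vec u,\Om)=\Om_{\vec b}$ a.e.), the change of variables $\vec y=\vec u(\vec x)$ together with \eqref{eq:relation_inverses}, $\det\nabla\vec u=\det D\vec u$, $\vec A^{-1}=(\det\vec A)^{-1}\adj\vec A$ and $|\adj\vec A|=|\cof\vec A|$ yields
\begin{equation*}
\int_{\Om_{\vec b}}|\nabla\vec u^{-1}|\,\dd\vec y=\int_{\Om}|\nabla\vec u^{-1}(\vec u(\vec x))|\,|\det\nabla\vec u(\vec x)|\,\dd\vec x=\int_{\Om}|\cof D\vec u(\vec x)|\,\dd\vec x .
\end{equation*}
Writing the singular values of $D\vec u(\vec x)$ as $\sigma_1,\sigma_2,\sigma_3\ge 0$, one has $|D\vec u|^2=\sigma_1^2+\sigma_2^2+\sigma_3^2$ and $|\cof D\vec u|^2=\sigma_1^2\sigma_2^2+\sigma_2^2\sigma_3^2+\sigma_3^2\sigma_1^2\le\frac{1}{3}(\sigma_1^2+\sigma_2^2+\sigma_3^2)^2$, so $|\cof D\vec u|\le\frac{1}{\sqrt{3}}|D\vec u|^2$ pointwise; as $\vec u\in H^1$ the last integral is finite, hence $\nabla\vec u^{-1}\in L^1(\Om_{\vec b})$ and \eqref{eq:controlDau-1} follows. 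Finally, if moreover $\vec u^{-1}\in BV(\Om_{\vec b},\R^3)$, then by the Calder\'on--Zygmund theorem $D^a\vec u^{-1}=\nabla\vec u^{-1}\,\mc{L}^3$ with the approximate differential a.e.\ unique, so by \eqref{eq:relation_inverses} it equals $D\vec u(\vec x)^{-1}$ at $\vec u(\vec x)$ for every $\vec x\in\Om_0$, hence for $\mc{L}^3$-a.e.\ $\vec y\in\Om_{\vec b}$ (as $\vec u(\Om_0)=\imG(\vec u,\Om)=\Om_{\vec b}$ a.e.). Pulling this back through $\vec u$ and using Lusin's $N^{-1}$ condition (so that the $\vec u$-preimage of the exceptional null set is null) gives $D^a\vec u^{-1}(\vec u(\vec x))=D\vec u(\vec x)^{-1}$ for a.e.\ $\vec x\in\Om$.

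I expect the main obstacle to be the first step: promoting the pointwise description of $\vec u$ on $\Om_0$ (coincidence with a $C^1$ map on a density-one set) to the corresponding description of $\vec u^{-1}$ near $\vec y$. It works thanks to two ingredients — the inverse function theorem for the comparison map $\vec w$, which is licit since $\det\nabla\vec u(\vec x)\ne 0$ on $\Om_0$, and Lemma \ref{lem:Muller_Spector}, which is precisely the device carrying density-one sets through $\vec u$ to density-one sets at the image — plus the minor bookkeeping of arranging $K\subset V\cap\Om_0$ beforehand. The remaining two assertions are then routine: a change of variables combined with the elementary bound $|\cof\vec A|\le\frac{1}{\sqrt{3}}|\vec A|^2$, and the standard differentiation theory for $BV$ maps.
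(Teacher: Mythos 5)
Your proof is correct, and the central step (approximate differentiability of $\vec u^{-1}$) is carried out by a genuinely different argument than the paper's. The paper proves \cite[adapted from HeMo11, Thm.\ 2 iii)]{} by directly verifying the definition: it introduces the set $E_\delta$ where $\vec u$ is well-approximated by $D\vec u(\vec x_0)$ and the ``bad'' set $A_\e$ where $\vec u^{-1}$ deviates from the expected linear approximation, derives an algebraic lower bound on $\e$ in terms of $\delta$, $|\vec F^{-1}|$ and $\inf_{|\vec v|=1}|\vec F\vec v|$ forcing $\vec u(E_\delta)\cap A_\e=\varnothing$ once $\delta$ is small, and then pushes the density-one set $E_\delta$ through $\vec u$ via Lemma \ref{lem:Muller_Spector} to conclude $D(A_\e,\vec y_0)=0$. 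You instead exploit the structure of $\Om_0$ more directly: you take the $C^1$ comparison map $\vec w$ from Definition \ref{def:Om0}, apply the classical inverse function theorem to $\vec w$ near $\vec x$, observe that $\vec u^{-1}$ agrees with $\vec w^{-1}$ on $\vec u(K)$, and use Lemma \ref{lem:Muller_Spector} only to confirm that $\vec u(K)$ has density one at $\vec y$. Both routes hinge on Lemma \ref{lem:Muller_Spector} as the device transporting density-one sets through $\vec u$; the difference is that you delegate the quantitative analysis of the inverse to the classical inverse function theorem rather than carrying out the difference-quotient estimate by hand. Your version is shorter and arguably more transparent, at the modest cost of having to prune $K$ down to $K\cap\overline{B(\vec x,\rho)}\cap\Om_0$ so that it sits inside the diffeomorphism neighbourhood $V$ (a benign operation, since intersecting with a ball around $\vec x$ and removing a null set preserves density one). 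The remaining two assertions — the $L^1$ bound \eqref{eq:controlDau-1} via the area formula and the inequality $|\cof\vec A|\le\frac{1}{\sqrt3}|\vec A|^2$, and the identification of $D^a\vec u^{-1}$ via Calder\'on--Zygmund together with Lusin's $N^{-1}$ condition — match the paper, modulo the paper's terser phrasing of the final step.
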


\begin{proof}
The proof is adapted from \cite[Theorem 2 iii)]{HeMo11}.
  Let \(\vec x_0\in \Om_0\) and define \(\vec y_0:=\vec u(\vec x_0)\) and \(\vec F:=D \vec u(\vec x_0)\). Thanks to Definition \ref{def:Om0}, \(\vec F\) is invertible and thanks to Lemma \ref{lem:Muller_Spector} we have \( D(\imG(\vec u,\Om),\vec y_0)=1\). Define, for each \(\delta >0\),
\begin{equation*}
E_\delta :=\left\{ \vec x\in \Om_0 \setminus \{ \vec x_0\}: \frac{|\vec u(\vec x)-\vec u(\vec x_0)-\vec F (\vec x-\vec x_0)|}{|\vec x-\vec x_0|}<\delta \right\}.
\end{equation*}
Since \(\vec u\) is approximately differentiable at \(\vec x_0\) and the set \(\Om_0\) is of full measure in \(\Om\), we deduce that \( D(E_\delta, \vec x_0)=1\) for all \(\delta>0\). Now for each \(\e>0\) we set
\begin{equation*}
A_\e:=\left\{ \vec y \in \imG(\vec u,\Om)\setminus \{\vec u (\vec x_0)\}: \frac{|\vec u^{-1}(\vec y)-\vec x_0-D\vec u(\vec x_0)^{-1} (\vec y-\vec u(\vec x_0))|}{|\vec y- \vec u(\vec x_0)|}>\e\right\}.
\end{equation*}
Let \(\vec x \in \Om_0 \setminus \{ \vec x_0\}\) and \(\vec y:=\vec u(\vec x)\). Thanks to Lemma \ref{lem:one-to-one}, we have \(\vec y \neq \vec y_0\). Set \( \vec r:=\vec y-\vec y_0-\vec F(\vec x-\vec x_0)\). Then
\[
\frac{|\vec x-\vec x_0-\vec F^{-1}(\vec y-\vec y_0)|}{|\vec y-\vec y_0|} \leq |\vec F^{-1}|\frac{|\vec r|}{|\vec x-\vec x_0|}\frac{|\vec x-\vec x_0|}{|\vec y-\vec y_0|}
\leq |\vec F^{-1}| \frac{|\vec r|}{|\vec x-\vec x_0|} \frac{1}{\left|\vec F \frac{\vec x-\vec x_0}{|\vec x-\vec x_0|} \right|-\frac{|\vec r|}{|\vec x-\vec x_0|}}.
\]
This shows that if \(\vec u(E_\delta)\cap A_\e\neq \varnothing\) for some \(\delta,\e>0\), then 
\begin{equation}\label{eq:(30)}
\e>|\vec F^{-1}|\frac{\delta}{\inf\{|\vec F \vec v|:|\vec v|=1\}-\delta}.
\end{equation}
Fix \(\e>0\). Then there exists \(\delta>0\) such that \eqref{eq:(30)} does not hold and, hence, \(\vec u(E_\delta)\cap A_\e=\varnothing\). As \(D(E_\delta, \vec x_0)=1\), then by Lemma \ref{lem:Muller_Spector}, \(D(\R^3\setminus \vec u(E_\delta),\vec y_0)=0\), and, hence, \(D(A_\e,\vec y_0)=0\). This proves that \(\vec u^{-1}\) is approximately differentiable at \(\vec u(\vec x_0)\) and its approximate differential is equal to \(D\vec u(\vec x_0)^{-1}\) and, thus, \eqref{eq:relation_inverses} holds.

We now assume that \(\imG(\vec u,\Om)=\Om_{\vec b}\) a.e.  Propositions \ref{prop:area-formula} and \ref{prop:relation_inverses} as well as the matrix inequality
\begin{equation}\label{eq:sqrt3}
 |\vec A|^2 \geq \sqrt{3} \left| \cof \vec A \right|, \qquad \vec A \in \R^{3 \times 3}
\end{equation}
(see Lemma \ref{le:better than 2} \ref{item:sqrt3} in Section \ref{sec:lower_semicontinuity}) show that 
\[
\int_{\Om_{\vec b}} |\nabla \vec u^{-1}(\vec y)| \, \dd \vec y =\int_{\Om} |\cof D\vec u (\vec x)| \, \dd \vec x \leq \frac{1}{\sqrt{3}} \int_{\Om} |D\vec u(\vec x)|^2 \, \dd \vec x 
\]
(see, if necessary, the proof of Corollary \ref{cor:area-formula} to see how the area formula applies to this context).

Finally, if in addition \(\vec u^{-1}\in BV(\Om_{\vec b},\R^3)\), then \(D^a \vec u^{-1} = \nabla \vec u^{-1}\) a.e., which implies the conclusion.
\end{proof}

We now make explicit the failure of the divergence identities \eqref{eq:divergence_identities} by means of the functional $\E$ introduced in \cite{HeMo10}.

\begin{definition}\label{def:surface_energy}
Let $\vec u : \Om \to \R^3$ be measurable and approximately differentiable a.e\@.
Suppose that $\det  \nabla \vec u \in L^1_{\loc} (\Om)$ and $\cof \nabla \vec u \in L^1_{\loc} (\Om,\R^{3\times 3})$.
For every $\vec f \in C^1_c (\Om \times \R^3,\R^3)$, define
\begin{equation*}%\label{eq:Euf}
 \E (\vec u, \vec f) := \int_{\Om} \left[ \cof \nabla \vec u (\vec x) \cdot D \vec f (\vec x, \vec u (\vec x)) 
 + \det \nabla\vec u (\vec x) \div \vec f (\vec x, \vec u (\vec x))  \right] \dd \vec x ,
\end{equation*}
where $D \vec f (\vec x, \vec y)$ denotes the derivative of $\vec f (\cdot, \vec y)$ 
evaluated at $\vec x$, while $\div \vec f (\vec x, \vec y)$ is the divergence of $\vec f (\vec x, \cdot)$ 
evaluated at $\vec y$.
\end{definition}

By definition of distributional divergence, the divergence identities hold if and only if $\E (\vec u, \phi \, \vec g) = 0$ for all $\phi \in C^1_c (\Om)$ and $\vec g \in C^1_c (\R^3, \R^3)$.
In fact, by density of sums of functions of separate variables (see, e.g., \cite[Corollary 1.6.5]{Llavona86}), this holds if and only if $\E (\vec u, \vec f) = 0$ for all $\vec f \in C^1_c (\Om \times \R^3,\R^3)$.
Of course, $\phi \, \vec g$ denotes the function $\phi (\vec x) \, \vec g (\vec y)$ for $(\vec x, \vec y) \in \Om \times \R^3$.

The following lemma helps understand our choice of the set $\B$ as the set to pose the relaxation of $F$.
  \begin{lemma}
    \label{le:two_facts}
    Let $\vec u\in H^1(\Omega, \R^3)$ be one-to-one a.e.~and satisfy
    $\vec u=\vec b$ in $\Omega \setminus \widetilde \Omega$,
    $\det D\vec u \in L^1_{\loc}(\Omega)$ and
    $\det D\vec u>0$ a.e.\
   Then the following conditions are equivalent:
   \begin{enumerate}[label=\alph*)]
    \item 
      $\vec u$ satisfies the divergence identities \eqref{eq:divergence_identities}.
    \item \label{it:Sobolev_inverse}
      $\imG(\vec u, \Omega) = \Omega_{\vec b}$ a.e.\ and
      $\vec u^{-1}\in W^{1,1}(\Omega_{\vec b}, \R^3)$.
   \end{enumerate}
  \end{lemma}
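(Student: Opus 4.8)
The plan is to prove both implications by relating the functional $\E(\vec u, \vec f)$ to the distributional gradient of the inverse via the change of variables $\vec y = \vec u(\vec x)$. The key computational identity is that, for $\vec f$ of separated-variable form $\vec f(\vec x, \vec y) = \phi(\vec x)\,\vec g(\vec y)$, the integrand $\cof \nabla \vec u(\vec x)\cdot D\vec f(\vec x,\vec u(\vec x)) + \det\nabla\vec u(\vec x)\,\div\vec f(\vec x,\vec u(\vec x))$ should be rewritten, using $\cof \nabla\vec u = (\det\nabla\vec u)\,\nabla\vec u^{-T}$ (valid a.e.\ by Proposition \ref{prop:relation_inverses}, since $\det D\vec u > 0$ a.e.), as $\det\nabla\vec u(\vec x)$ times a divergence-in-$\vec y$ expression evaluated at $\vec u(\vec x)$. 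Concretely, pushing forward by the area formula (Corollary \ref{cor:area-formula}, which applies once we know $\imG(\vec u,\Om) = \Om_{\vec b}$ a.e.), one gets
\begin{equation*}
\E(\vec u, \phi\,\vec g) = \int_{\Om_{\vec b}} \Big[ \nabla(\phi\circ\vec u^{-1})(\vec y) \cdot \vec g(\vec y) + (\phi\circ\vec u^{-1})(\vec y)\,\div\vec g(\vec y) \Big]\,\dd\vec y ,
\end{equation*}
i.e.\ $\E(\vec u, \phi\,\vec g)$ equals the pairing that, when it vanishes for all test pairs, says precisely that $\phi\circ\vec u^{-1}$ has distributional gradient in $\Om_{\vec b}$ given by the $L^1$ function $\nabla(\phi\circ\vec u^{-1})$ — that is, $\phi\circ\vec u^{-1} \in W^{1,1}$ with no singular part.

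For the implication \ref{it:Sobolev_inverse} $\Rightarrow$ a), one starts from $\vec u^{-1}\in W^{1,1}(\Om_{\vec b},\R^3)$ and $\imG(\vec u,\Om) = \Om_{\vec b}$ a.e.; then $\vec u^{-1}$ composed with suitable cutoffs is still Sobolev, $D^a\vec u^{-1}(\vec u(\vec x)) = D\vec u(\vec x)^{-1}$ a.e.\ by Proposition \ref{prop:relation_inverses}, and the displayed identity above gives $\E(\vec u, \phi\,\vec g) = \int_{\Om_{\vec b}} \div\big((\phi\circ\vec u^{-1})\vec g\big)\,\dd\vec y = 0$ because $(\phi\circ\vec u^{-1})\vec g$ has compact support in $\Om_{\vec b}$ up to a null set (here one uses that $\phi$ has compact support in $\Om$ and the boundary behaviour forced by $\vec u = \vec b$ on $\Om\setminus\widetilde\Om$, so that $\phi\circ\vec u^{-1}$ vanishes near $\partial\Om_{\vec b}$). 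By the density remark following Definition \ref{def:surface_energy}, vanishing on separated test functions gives the divergence identities \eqref{eq:divergence_identities}. For the converse a) $\Rightarrow$ \ref{it:Sobolev_inverse}, the first task is to establish $\imG(\vec u,\Om) = \Om_{\vec b}$ a.e.; this is the topological-degree part, where one uses the divergence identities to deduce the INV condition (as indicated in the paper, adapting \cite[Lemma 5.1]{BaHeMo17} via the Brezis–Nirenberg degree) and combines it with $\vec u = \vec b$ on $\Om\setminus\widetilde\Om$ and $\vec b$ bi-Lipschitz to pin down the geometric image as $\Om_{\vec b}$ up to measure zero. Once that is known, Proposition \ref{prop:relation_inverses} already gives $\nabla\vec u^{-1} \in L^1(\Om_{\vec b})$; it remains to upgrade this to $\vec u^{-1}\in W^{1,1}$, i.e.\ to show the distributional gradient of $\vec u^{-1}$ has no singular part, which is exactly what the vanishing of $\E(\vec u, \phi\,\vec g)$ for all test pairs yields through the pushforward identity run in reverse.

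The main obstacle is the converse direction, specifically the step $\imG(\vec u,\Om) = \Om_{\vec b}$ a.e.\ — extracting genuine topological/geometric information (that no material is lost and none escapes the deformed boundary surface) purely from the distributional divergence identities. This requires the degree-theoretic machinery: one approximates $\vec u$ on good spheres where it is continuous (via the coarea formula, a.e.\ sphere works since $\vec u\in H^1$), uses that the divergence identities force the degree of $\vec u$ on such spheres to coincide with that of $\vec b$, and then passes from the sphere-by-sphere statement to the full-measure statement about $\imG(\vec u,\Om)$. The remaining points — the change-of-variables bookkeeping, the $L^1$ bound via \eqref{eq:sqrt3}, and the compact-support verification near $\partial\Om_{\vec b}$ — are routine given the results already assembled in Section \ref{sec:preliminaries}.
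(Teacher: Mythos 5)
Your proposal is correct, and for the implication \ref{it:Sobolev_inverse}\,$\Rightarrow$\,a) it matches the paper's argument essentially verbatim: derive the identity \eqref{eq:relation_E_derivative_BV} by pushing $\E(\vec u,\phi\,\vec g)$ forward via the area formula and Proposition \ref{prop:relation_inverses}, then observe it vanishes because $\phi\circ\vec u^{-1}$ is $W^{1,1}$ with compact support in $\Om_{\vec b}$ (your remark that the compact support, forced by $\vec u=\vec b$ on $\Om\setminus\widetilde\Om$, is what licenses the boundaryless integration by parts is a point the paper leaves implicit, and it is worth making explicit). Where you genuinely depart from the paper is in a)\,$\Rightarrow$\,\ref{it:Sobolev_inverse}, specifically the second half. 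The paper treats the two conclusions as two separate imports: $\imG(\vec u,\Om)=\Om_{\vec b}$ a.e.\ by adapting \cite[Theorem 4.1]{BaHeMo17} with the Brezis--Nirenberg degree, and then $\vec u^{-1}\in W^{1,1}$ by combining condition INV (from \cite[Lemma 5.1]{BaHeMo17}) with \cite[Theorem 3.4]{HeMo15}. You instead note that once $\imG(\vec u,\Om)=\Om_{\vec b}$ a.e.\ has been secured by degree theory, the identity \eqref{eq:relation_E_derivative_BV} already holds (its derivation needs only a.e.\ approximate differentiability of $\vec u^{-1}$, not any $BV$ or $W^{1,1}$ hypothesis), so the vanishing of $\E(\vec u,\phi\,\vec g)$ says precisely that the distributional gradient of $\phi\circ\vec u^{-1}$ on $\R^3$ equals the $L^1$ approximate gradient, i.e.\ $\phi\circ\vec u^{-1}\in W^{1,1}$ for every $\phi\in C^1_c(\Om)$; choosing $\phi$ equal to the coordinate functions on a neighbourhood of $\overline{\widetilde\Om}$ and patching with the Lipschitz map $\vec b^{-1}$ near $\partial\Om_{\vec b}$ then gives $\vec u^{-1}\in W^{1,1}(\Om_{\vec b},\R^3)$. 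This makes the Sobolev regularity of the inverse a direct consequence of the identity rather than an external import, and so slightly streamlines the proof; the degree-theoretic step establishing $\imG(\vec u,\Om)=\Om_{\vec b}$ a.e.\ remains the irreducible core in both routes, and both you and the paper defer it to the same references.
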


  \begin{proof}
``\eqref{eq:divergence_identities} implies $\imG(\vec u, \Omega) = \Omega_{\vec b}$ a.e.'':
    we refer to \cite[Proposition 4.11]{BaHEMoRo_23PUB} for  a proof in the axisymmetric setting. 
    In the general case the proof is exactly as in \cite[Theorem 4.1]{BaHeMo17} but using 
    the Brezis-Nirenberg degree, cf.\ \cite{Brezis_Nirenberg_1995,CoDeLe03}, instead of the 
    Brouwer degree; the conclusion is that there exists an open set \(U\) with \(\widetilde{\Om}\Subset U\Subset \Om\) 
    such that
\[ \imG(\vec u,U)=\imT(\vec u,U)=\imT(\vec b,U)=\vec b(U) \text{ a.e.}, \]
hence \(\imG(\vec u,\Om)=\Om_{\vec b}\).

``\eqref{eq:divergence_identities} implies $\vec u^{-1}\in W^{1,1}(\Omega_{\vec b}, \R^3)$'':
see \cite[Proposition 4.12]{BaHEMoRo_23PUB} in the axisymmetric case. In the general case, one can apply \cite[Lemma 5.1]{BaHeMo17} to show that condition INV holds; as before one uses the Brezis-Nirenberg degree. Then, with \cite[Theorem 3.4]{HeMo15} one concludes that \(\vec u^{-1}\) is Sobolev, first in some open set \(V\) with \(\widetilde{\Om}_\vec b\Subset V\Subset \Om_\vec b\) and then in \(\Om_\vec b\).

``\ref{it:Sobolev_inverse} implies \eqref{eq:divergence_identities}'':
 this follows, e.g., from Step 4 in the proof of \cite[Thm.~2]{HeMo11}, but we include here the short proof for the convenience of the reader.
 Suppose that  $\imG(\vec u, \Omega) = \Omega_{\vec b}$ a.e.
 By Definition \ref{def:surface_energy}, Corollary \ref{cor:area-formula}, Proposition  \ref{prop:relation_inverses} and the relation $\vec A^{-1}=\frac{\adj \vec A}{\det \vec A}$ valid for $\vec A \in \R^{3 \times 3}_+$, we have,
 for \(\phi \in C^\infty_c(\Om)\) and \(\vec g\in C^\infty_c(\R^3,\R^3)\),  that
\begin{align*}
\E(\vec u,\phi \, \vec g)&=\int_{\Om}\left[ \left( \cof D\vec u(\vec x) D \phi(\vec x) \right) \cdot \vec g(\vec u(\vec x))+\det D\vec u(\vec x)\phi(\vec x)\div \vec g(\vec u(\vec x)) \right] \dd \vec x\\
&=\int_{\Om_{\vec b}} \left[ \left( \nabla \vec u^{-1}(\vec y)^T D \phi (\vec u^{-1}(\vec y)) \right) \cdot \vec g(\vec y) +\phi(\vec u^{-1}(\vec y))\div \vec g (\vec y) \right] \dd \vec y.
\end{align*}
As $\phi \in C^1_c(\Om)$ we have \(\phi\circ \vec u^{-1} \in L^{\infty}(\Om_{\vec b})\) and, by \cite[Proposition 3.71]{AmFuPa00} and Proposition \ref{prop:relation_inverses}, that
\begin{equation*}
 \nabla (\phi\circ \vec u^{-1})= (\nabla \vec u^{-1})^T D \phi(\vec u^{-1}) ,
\end{equation*}
which is in \(L^1(\Om_{\vec b}) \) thanks to \eqref{eq:controlDau-1}.
We can then write
\begin{equation}\label{eq:relation_E_derivative_BV}
\E(\vec u, \phi \, \vec g) = \langle \phi \circ \vec u^{-1},\div \vec g \rangle_{\D'(\Om_{\vec b})} + \langle \nabla (\phi \circ \vec u^{-1}), \vec g \rangle_{\D'(\Om_{\vec b},\R^3)}.
\end{equation}
  If $\vec u^{-1}\in W^{1,1}(\Omega_{\vec b}, \R^3)$
  then, by the chain rule (e.g., \cite[Theorem 4.2.2.4]{EvGa92}),
  $\phi \circ \vec u^{-1} \in W^{1,1} (\Om)$ and the expression in \eqref{eq:relation_E_derivative_BV} is zero for every $\vec g \in C^1_c(\R^3, \R^3)$.
  \end{proof}

\section{Compactness of sequences in $\B$ with equibounded energy $F$}\label{sec:compactness}

In this section we show that the set of deformation maps such that their geometric image is  \(\Om_\vec b\) and whose inverses are in \( BV(\Om_{\vec b},\R^3)\) is compact for the weak convergence in \(H^1\) if we assume a uniform bound on the neo-Hookean energy and on the $BV$ norm of the inverses.
Furthermore, those bounds also provide that the weak \(H^1\) limit is one-to-one a.e.\ and satisfies that \(\det D \vec u\neq 0\) a.e.
In this respect, a uniform bound on the $BV$ norm of the inverses of a sequence of deformation maps plays a role analogous to the one of a uniform bound on the surface energy defined in \cite{HeMo10}.
An intermediate step is to show the validity of the divergence identities for $\vec u^{-1}$.

We start with the following variant of \cite[Theorem 2]{HeMo10}.

\begin{proposition}\label{pr:limituj}
Let $\{\vec u_j\}_j$ be a sequence in $\B$. Assume that $\{F(\vec u_j)\}_j$ is equibounded and that
$\vec u_j \rightharpoonup \vec u$ in $H^1(\Om,\R^3)$. Then   
 \begin{enumerate}[label=\roman*)]
 \item \label{i)} $\det D \vec u \ne 0$\ \text{a.e.}
 \item \label{ii)} $\vec u$ is one-to-one a.e.
 \item \label{iii)}$\imG(\vec u, \Omega) = \Om_{\vec b}$\ a.e.
 \item \label{iv)}$\vec u^{-1} \in BV(\Om_{\vec b}, \R^3)$,
 \item \label{v)} up to a subsequence, $\vec u_j^{-1} \weakcs \vec u^{-1}$ in $BV (\Om_{\vec b}, \R^3)$ and $\vec u_j^{-1}\to \vec u^{-1}$ a.e. 
 \item \label{vi)} $  |\det D \vec u_j| \rightharpoonup |\det D \vec u|$ in \(L^1(\Om_{\vec b})\).
 \end{enumerate}
\end{proposition}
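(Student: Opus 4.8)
The plan is to run the Müller–Spector-type compactness argument of \cite[Theorem 2]{HeMo10}, with the role of the surface energy $\mathcal{E}$ replaced by the uniform bound on $\|D^s \vec u_j^{-1}\|$ coming from the equiboundedness of $F$. The starting point is the control \eqref{eq:controlDau-1}: since $E(\vec u_j)$ is bounded, the inverses $\vec u_j^{-1}$ have uniformly bounded $L^1$ norm of their absolutely continuous gradients, and since $\|D^s \vec u_j^{-1}\|$ is bounded by $\tfrac12 F(\vec u_j)$, the sequence $\{\vec u_j^{-1}\}_j$ is bounded in $BV(\Om_{\vec b},\R^3)$ (the $L^1$ bound on $\vec u_j^{-1}$ itself follows from the fact that all $\vec u_j^{-1}$ take values in $\Om$, a bounded set). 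Hence, up to a subsequence, $\vec u_j^{-1} \weakcs \vec v$ in $BV$ and $\vec u_j^{-1}\to \vec v$ in $L^1$ and a.e.\ on $\Om_{\vec b}$, for some $\vec v\in BV(\Om_{\vec b},\R^3)$. The crux of the proof is to identify $\vec v$ with $\vec u^{-1}$, which is not even a priori defined until we know $\vec u$ is one-to-one a.e.\ with $\det D\vec u\neq 0$ a.e.

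Concretely, I would proceed in the following order. First, derive the key identity relating the forward and inverse maps: for every $\vec f\in C^1_c(\Om\times\R^3,\R^3)$, the functional $\mathcal{E}(\vec u_j,\vec f)$ can be rewritten via the area formula (Corollary \ref{cor:area-formula}) and \eqref{eq:relation_E_derivative_BV}-type computations as an integral over $\Om_{\vec b}$ involving $\vec u_j^{-1}$ and its distributional gradient; since each $\vec u_j\in\B$ need not satisfy the divergence identities, $\mathcal{E}(\vec u_j,\vec f)$ is a remainder controlled by $\|D^s\vec u_j^{-1}\|\,\|\vec f\|_\infty$. Next, pass to the limit: weak $H^1$ convergence gives $D\vec u_j\rightharpoonup D\vec u$ and $\cof D\vec u_j\rightharpoonup\cof D\vec u$ in $L^1$ (this is the standard weak continuity of minors, using the $L^2$ bound so that the cofactors are equiintegrable), while $\det D\vec u_j$ is equiintegrable because $H$ is superlinear — here the coercivity \eqref{eq:explosiveH} of $H$ is essential, giving item \ref{vi)} after identifying the weak limit via the a.e.\ convergence of $\vec u_j^{-1}$ and the change of variables. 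Then one shows $\det D\vec u\neq 0$ a.e.: if it vanished on a set of positive measure, the area formula would force the "lost volume" to appear in $D^s\vec v$ (or contradict $\imG(\vec u,\Om)=\Om_{\vec b}$), exploiting that $\int_{\Om}|\det D\vec u_j| = |\Om_{\vec b}|$ for all $j$ by Corollary \ref{cor:area-formula}; this gives \ref{i)}. Injectivity a.e.\ \ref{ii)} then follows as in \cite{MuSp95,HeMo10}: from the limit of the divergence-identity remainders one recovers that $\vec u$ satisfies condition INV (or the divergence identities modulo a singular measure), which together with $\det D\vec u\neq0$ a.e.\ yields one-to-one a.e.\ via the degree argument. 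Once \ref{i)} and \ref{ii)} hold, $\vec u^{-1}$ is well defined on $\imG(\vec u,\Om)$ by Definition \ref{def:inverses}, and an argument using lower semicontinuity of the area/degree shows $\imG(\vec u,\Om)=\Om_{\vec b}$ a.e., giving \ref{iii)}. Finally, identify $\vec v=\vec u^{-1}$ a.e.\ on $\Om_{\vec b}$: this is done by testing against $\varphi\circ\vec u^{-1}$ versus $\varphi\circ\vec u_j^{-1}$ and using the change of variables of Corollary \ref{cor:area-formula} together with the a.e.\ convergence $\vec u_j^{-1}\to\vec v$, so that $\vec v\in BV$ gives \ref{iv)} and \ref{v)}.

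The main obstacle I expect is the identification $\vec v=\vec u^{-1}$ — equivalently, ruling out that volume or injectivity is "lost in the limit." The difficulty is that $\vec u_j\rightharpoonup\vec u$ only weakly in $H^1$, so neither $\vec u_j(\vec x)\to\vec u(\vec x)$ uniformly nor $\det D\vec u_j\to\det D\vec u$ strongly, and a priori the limit could fail to be injective or could have degenerate Jacobian on a bad set; the Conti–De Lellis example shows this is genuinely delicate. The resolution hinges on the $BV$ bound on the inverses: because $\vec u_j^{-1}$ converges in $L^1(\Om_{\vec b})$ and strongly controls "where matter goes," one can transfer compactness from the target back to the source. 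Technically this means carefully combining the area formula, the equiintegrability of $\det D\vec u_j$ (via $H$), the weak continuity of cofactors, and the degree-theoretic characterisation of INV, exactly in the spirit of \cite[Theorem 2]{HeMo10} but bookkeeping the extra singular term $2\|D^s\vec u^{-1}\|$ throughout. A secondary technical point is ensuring the limit measure $D\vec v$ decomposes correctly so that $D^a\vec v=\nabla\vec u^{-1}$ a.e.\ (Proposition \ref{prop:relation_inverses}), which is needed to make sense of $F(\vec u)$ and will feed into the lower semicontinuity proof in the next section.
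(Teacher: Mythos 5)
Your starting step---using \eqref{eq:controlDau-1} and the bound on $\|D^s\vec u_j^{-1}\|$ to get $\vec u_j^{-1} \weakcs \vec v$ in $BV$ and a.e.---is correct and is also where the paper begins. But the rest of your route has serious gaps and differs substantially from what the paper actually does.

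First, the claim that the $L^2$ bound on $D\vec u_j$ makes $\cof D\vec u_j$ equiintegrable, so that $\cof D\vec u_j \rightharpoonup \cof D\vec u$ in $L^1$, is false. A bound in $L^2$ makes $|D\vec u_j|$ equiintegrable (it is linear in the gradient), not $|D\vec u_j|^2$; and $|\cof D\vec u_j|$ grows like $|D\vec u_j|^2$. Concentration of the cofactor (as in the Conti--De Lellis example) is exactly the phenomenon that makes the neo-Hookean problem hard, and if $\cof D\vec u_j$ were automatically equiintegrable the whole paper would be unnecessary. What \emph{is} equiintegrable---after a change of variables and using that $H_1(t):=H(1/t)t$ is superlinear---is $\adj \nabla\vec u_j^{-1}$, which is a different object tied to the \emph{linear} quantity $|D\vec u_j|$; the paper proves this in Step 1 of Proposition~\ref{pr:convergenza det e cof}, not here, and it is not a ``standard weak continuity of minors'' fact.

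Second, your argument for item~\ref{i)} is circular and vague: you invoke $\imG(\vec u,\Om)=\Om_{\vec b}$, which is item~\ref{iii)} and not yet established, and the ``lost volume appears in $D^s\vec v$'' step is not an actual mechanism. The paper's argument is different and more elementary: by De~la Vall\'ee Poussin (using $H(t)/t\to\infty$), $|\det D\vec u_j|\rightharpoonup\theta$ in $L^1(\Om)$ for some $\theta$; crucially $\theta>0$ a.e., because if $\theta$ vanished on a set of positive measure then $\det D\vec u_j\to 0$ a.e.\ there and $H(s)\to\infty$ as $s\to 0$ would blow up the energy---this is where the coercivity of $H$ near $0$ (not its superlinearity at $\infty$) enters, and you misplace it. Then the change-of-variables identity $\int_\Om \psi(\vec u)\theta = \int_{\Om_{\vec b}}\psi$ with $\psi$ the characteristic function of a null Borel set containing $\vec u(\{\det D\vec u=0\})$ forces $\{\det D\vec u=0\}$ to be negligible.

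Third, the paper never invokes degree theory or the INV condition for items~\ref{ii)}--\ref{iii)}; the maps in $\B$ do not a priori satisfy INV, and transferring a controlled remainder in $\E(\vec u_j,\cdot)$ to an INV-type conclusion for the limit is nontrivial. Instead, the paper passes to the limit in $\int_\Om\varphi(\psi\circ\vec u_j)|\det D\vec u_j| = \int_{\Om_{\vec b}}(\varphi\circ\vec u_j^{-1})\psi$ to get $\int_\Om\varphi(\psi\circ\vec u)\theta = \int_{\Om_{\vec b}}(\varphi\circ\vec v)\psi$, then applies the area formula on the left to obtain, for a.e.\ $\vec y\in\Om_{\vec b}$, the pointwise identity $\sum_{\vec u(\vec x)=\vec y}\varphi(\vec x)\theta(\vec x)/|\det D\vec u(\vec x)| = \varphi(\vec v(\vec y))$; reading this off with suitable Borel $\varphi$ simultaneously gives $\imG(\vec u,\Om)=\Om_{\vec b}$, injectivity a.e., $\vec v=\vec u^{-1}$, and $\theta=|\det D\vec u|$. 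This bookkeeping with the ``master'' identity involving $\theta$ is the essential idea you are missing; the role of the $BV$ bound is only to produce the a.e.\ convergent $\vec v$ that closes the circle, not to reconstruct the INV/degree machinery of \cite{HeMo10}.
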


\begin{proof}
Since $\sup_j \int_\Om H(|\det D\vec u_j|) <\infty$, by using the De la Vall\'ee Poussin criterion we can find 
$\theta \in L^1(\Om)$ such that, for a subsequence,
\begin{equation*}
|\det D\vec u_j| \rightharpoonup \theta \text{ in } L^1(\Om).
\end{equation*}
As $|\det D\vec u_j| > 0$ a.e.\ for all $j \in \N$, we have that $\theta \geq 0$ a.e.
In fact, $\theta>0$ a.e., since otherwise there would exist a set $A \subset \Om$ of positive measure such that, for a subsequence, $\det D\vec u_j \to 0$ in $L^1 (A)$ and a.e.\ in $A$.
By Fatou's lemma and the properties of \(H\) in \eqref{eq:explosiveH}, we would obtain
\[
 F(\vec u_j) \geq \int_{\Om} H(|\det D \vec u_j|) \, \dd \vec x \geq \int_A H(|\det D \vec u_j|) \, \dd \vec x \to \infty \qquad \text{as } j \to \infty ,
\]
a contradiction.
Hence, $\theta>0$ a.e.\ in $\Om$.
Passing to a subsequence we can also assume that $\vec u_j\to \vec u$ a.e.

We first want to show that the Jacobian determinant of $\vec u$ is different from zero.
Let $\psi: \Om_{\vec b} \to \R$ be continuous and bounded.
An application of the change of variables formula (Corollary \ref{cor:area-formula}) shows that 
 \begin{equation*}
  \int_\Om \psi(\vec u_j(\vec x)) |\det D \vec u_j (\vec x) |\, \dd \vec x = 
  \int_{\Om_\vec b} \psi(\vec y) \, \dd \vec y.
 \end{equation*}
 Since \(\{\psi(\vec u_j)\}_j\) is equibounded in \(L^\infty\), a standard convergence result (see, e.g., \cite[Proposition 2.61]{FoLe07}) shows that
 \begin{equation}\label{eq:psitheta}
  \int_\Om \psi(\vec u(\vec x)) \, \theta (\vec x)\, \dd \vec x = \int_{\Om_{\vec b}} \psi(\vec y) \, \dd \vec y .
 \end{equation}
By approximation, the above formula remains valid for any $\psi$ bounded Borel.
Let $V:=\{\vec x\in \Om_d : \det D \vec u(\vec x)=0\}$.
Proposition \ref{prop:area-formula} shows that $|\vec u(V)|=0$.
Let $U$ be a Borel set such that $\vec u(V)\subset U$
and $|U|=0$. By taking  $\psi=\chi_U$, we obtain 
 \begin{equation*}
  \int_V \theta(\vec x) \, \dd\vec x = 0.
 \end{equation*}
Since $\theta$ is positive a.e., necessarily $|V|=0$, proving \ref{i)}.
 
Set $\vec v_j :=\vec u_j^{-1}$.
Thanks to \eqref{eq:controlDau-1}, from the assumption that \( \{ F(\vec u_j) \}_j\) is equibounded, we find that both the absolutely continuous and the singular parts of $D\vec v_j$ are equibounded, so $\|D\vec v_j\|$ is equibounded. By compactness, up to a subsequence, 
$\vec v_j$ converges weakly\(^*\) and a.e.\ in $BV({\Om}_{\vec b},\R^3)$ to some $\vec v$. 
 
Now we apply a similar argument leading to \eqref{eq:psitheta}.
For $\varphi :\Om \to \R$ and \( \psi : \Om_{\vec b} \to \R \) both continuous and bounded, we obtain first 
 \begin{equation*}
  \int_\Om \varphi(\vec x) \psi(\vec u_j(\vec x))|\det D \vec u_j(\vec x)| \, \dd\vec x 
  = \int_{\Om_{\vec b}} \varphi(\vec v_j(\vec y)) \psi(\vec y) \, \dd\vec y ,
 \end{equation*}
then
\begin{equation*}    
  \int_\Om \varphi(\vec x) \psi(\vec u(\vec x))\theta(\vec x) \, \dd\vec x 
  = \int_{\Om_{\vec b}} \varphi(\vec v(\vec y)) \psi(\vec y) \, \dd\vec y
 \end{equation*}
and, finally, that the above formula is valid for all $\varphi$ and $\psi$ bounded Borel.
As a consequence, with \(\Om_d\) the set of approximate differentiability of \(\vec u\), Proposition \ref{prop:area-formula} shows that
 \begin{equation*}
  \int_{\R^3} \psi(\vec y) \sum_{\substack{\vec x \in \Om_d\\
    \vec u(\vec x) = \vec y}} \varphi(\vec x)\frac{\theta(\vec x)}{|\det D \vec u(\vec x)|}\dd\vec y 
    = \int_{\Om_{\vec b}} \varphi(\vec v(\vec y)) \psi(\vec y) \dd\vec y
 \end{equation*}
 for any $\varphi$ and $\psi$ bounded Borel.
Since the formula holds for all $\psi$, then  
 \begin{equation*}
   \sum_{\substack{\vec x \in \Om_d\\
    \vec u(\vec x) = \vec y}} \varphi(\vec x)\frac{\theta(\vec x)}{|\det D \vec u(\vec x)|} = 0
    \quad \text{for a.e.\ $\vec y\in \R^3\setminus \Om_{\vec b}$}
 \end{equation*}
and 
 \begin{equation}\label{eq:second}
   \sum_{\substack{\vec x \in \Omega_d\\
    \vec u(\vec x) = \vec y}} \varphi(\vec x)\frac{\theta(\vec x)}{|\det D \vec u(\vec x)|} =  \varphi(\vec v(\vec y))
    \quad \text{for a.e.\ $\vec y\in \Om_{\vec b}$}.
 \end{equation}
From the first identity we obtain $\vec u(\Omega_d) \subset \Om_{\vec b}$ a.e., because
\begin{equation*}
 \int_{\{\vec x \in \Omega_d: \vec u(\vec x) \not\in \Om_{\vec b}\}} \varphi(\vec x)\theta(\vec x) \, \dd \vec x =
 \int_{\vec u(\Omega_d)\setminus \Om_{\vec b}} \sum_{\substack{\vec x \in \Omega_d\\
    \vec u(\vec x) = \vec y}} \varphi(\vec x)\frac{\theta(\vec x)}{|\det D \vec u(\vec x)|}\dd\vec y =0 ,
\end{equation*}
so taking $\varphi=\chi_\Omega$ and using the positivity of $\theta$ we obtain $|\{\vec x \in \Omega_d: \vec u(\vec x) \not\in \Om_{\vec b}\}| = 0$, and, by \eqref{eq:null_set_property}, $|\vec u(\Omega_d)\setminus \Om_{\vec b}| = 0$. 

Now consider the second identity. 
Any point in $\Om_{\vec b}$ for which formula \eqref{eq:second} is valid is necessarily in $\vec u(\Omega_d)$, since otherwise the left-hand side would always be zero, whereas one may find $\varphi$ such that the right-hand side is not zero.
This shows that $|\Om_{\vec b} \setminus \vec u (\Om_d)| = 0$ and, hence, $\vec u (\Om_d) = \Om_{\vec b}$ a.e.
By property \eqref{eq:null_set_property}, $\vec u (\Om_d) = \vec u (\Om_0)$ a.e., so \ref{iii)} is proved.

Let $\Om_{\vec b}'$ be the set of $\vec y \in \Om_{\vec b}$ for which formula \eqref{eq:second} is valid.
Fix $\vec y_0 \in \Om_{\vec b}'$ and set $\vec x_0 := \vec v(\vec y_0)$.
Taking any bounded Borel $\varphi$ such that $\varphi (\vec x_0) = 0$ we obtain from \eqref{eq:second} that
\[
\sum_{\substack{\vec x \in \Omega_d\\
    \vec u(\vec x) = \vec y_0}} \varphi(\vec x)\frac{\theta(\vec x)}{|\det D \vec u(\vec x)|} =  0 .
\]
As there is no composition with $\vec u$ or $\vec v$ in the formula above, by approximation it remains valid for any bounded measurable $\varphi$ with $\varphi (\vec x_0) = 0$.
Taking $\varphi$ to be the characteristic function of $\{ \vec x \in \Om_d \setminus \{ \vec x_0 \} : \theta (\vec x) > 0 \}$ we obtain
\[
\sum_{\substack{\vec x \in \Omega_d \setminus \{ \vec x_0 \}\\
    \vec u(\vec x) = \vec y_0 , \ \theta (\vec x) > 0}} \frac{\theta(\vec x)}{|\det D \vec u(\vec x)|} =  0 ,
\]
so there is no $\vec x \in \Omega_d \setminus \{ \vec x_0 \}$ such that $\vec u(\vec x) = \vec y_0$ and $\theta (\vec x) > 0$.
This shows that $\vec u$ is one-to-one in $\{ \vec x \in \Om_d : \vec u (\vec x) \in \Om_{\vec b}' , \, \theta (\vec x) > 0\}$.
Thanks to \ref{i)}, this set is of full measure in $\Om$ (see, e.g., \cite[Remark 2.3 (b)]{BrFrMo24} for a proof, 
which in fact is an easy consequence of Proposition \ref{prop:area-formula}).
This proves \ref{ii)}.

Now we take $\varphi = \chi_{\{ \vec x_0 \}}$ in \eqref{eq:second} and obtain that
\[
 \sum_{\substack{\vec x \in \Omega_d \cap \{ \vec x_0 \}\\ \vec u(\vec x) = \vec y}} \frac{\theta(\vec x)}{|\det D \vec u(\vec x)|} = 1 ,
\]
which implies that $\vec x_0 \in \Om_d$, $\vec u (\vec x_0) = \vec y_0$ and $\theta(\vec x_0) = |\det D \vec u(\vec x_0)|$.
Equality $\vec u (\vec x_0) = \vec y_0$ says that $\vec v= \vec u^{-1}$ in $\Om_{\vec b}'$ and, hence, $\vec v= \vec u^{-1}$ a.e.
This proves \ref{iv)} and \ref{v)}.
Finally, equality $\theta(\vec x_0) = |\det D \vec u(\vec x_0)|$ says that $\theta = |\det D \vec u|$ in the set $\{ \vec x \in \Om_d: \vec u (\vec x) \in \Om_{\vec b}' \}$, which, as before, has full measure in $\Om$.
Thus, $\theta = |\det D \vec u|$ a.e.\ and, hence, \ref{vi)} holds. 
\end{proof}

As an immediate consequence of Proposition \ref{pr:limituj}, we obtain  the compactness of sequences in $\B$ with equibounded energy $F$.

\begin{corollary}\label{cor:Bcompact}
Let $\{\vec u_j \}_j \subset \B$ be such that $\{ F (\vec u_j)\}_j$ is equibounded.
Then there exists $\vec u \in \B$ such that, up to a subsequence, $\vec u_j \rightharpoonup \vec u$ 
in $H^1 (\Om, \R^3)$. In particular $\overline{\A}\subset \B$.
\end{corollary}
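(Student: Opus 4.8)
The plan is to extract a weakly convergent subsequence and then read off every defining property of $\B$ from Proposition~\ref{pr:limituj}, the only extra ingredient being the finiteness of the energy of the limit. First I would note that $\int_\Om |D\vec u_j|^2\,\dd\vec x\le E(\vec u_j)\le F(\vec u_j)$, so $\{D\vec u_j\}_j$ is bounded in $L^2(\Om,\R^{3\times 3})$; since $\vec u_j-\vec b$ vanishes on $\Om\setminus\widetilde\Om$ and $\widetilde\Om\Subset\Om$, a Poincar\'e inequality makes $\{\vec u_j\}_j$ bounded in $H^1(\Om,\R^3)$. By reflexivity there are a subsequence (not relabelled) and $\vec u\in H^1(\Om,\R^3)$ with $\vec u_j\rightharpoonup\vec u$ in $H^1$, and, passing to a further subsequence, $\vec u_j\to\vec u$ a.e., so that $\vec u=\vec b$ a.e.\ in $\Om\setminus\widetilde\Om$.

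Next I would apply Proposition~\ref{pr:limituj} along this subsequence. Items \ref{i)}--\ref{iv)} immediately give $\det D\vec u\neq 0$ a.e., $\vec u$ one-to-one a.e., $\imG(\vec u,\Om)=\Om_{\vec b}$ a.e., and $\vec u^{-1}\in BV(\Om_{\vec b},\R^3)$, so it only remains to verify $E(\vec u)<\infty$. The Dirichlet term $\int_\Om|D\vec u|^2\,\dd\vec x$ is weakly lower semicontinuous on $H^1$; for the determinant term, item \ref{vi)} gives $|\det D\vec u_j|\rightharpoonup|\det D\vec u|$ in $L^1$, and since $H$ is convex (and extends to a convex function on $\R$ by the value $+\infty$ on $(-\infty,0]$), the lower semicontinuity of convex integral functionals with respect to weak $L^1$ convergence yields $\int_\Om H(|\det D\vec u|)\,\dd\vec x\le\liminf_j\int_\Om H(|\det D\vec u_j|)\,\dd\vec x$. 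Summing the two inequalities, $E(\vec u)\le\liminf_j E(\vec u_j)\le\liminf_j F(\vec u_j)<\infty$. Together with $\vec u=\vec b$ in $\Om\setminus\widetilde\Om$ this exhausts the definition of $\B$, so $\vec u\in\B$, which proves the first assertion.

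For the last assertion, take $\vec u\in\overline{\A}$ and a sequence $\{\vec u_n\}_n\subset\A$ with $\sup_n E(\vec u_n)<\infty$ and $\vec u_n\rightharpoonup\vec u$ in $H^1$. Since $\A\subset\B$ and, by Lemma~\ref{le:two_facts}, each $\vec u_n$ has a Sobolev inverse, $D^s\vec u_n^{-1}=0$ and hence $F(\vec u_n)=E(\vec u_n)$, so $\{\vec u_n\}_n$ is a sequence in $\B$ with equibounded $F$-energy; the first part then produces a subsequence converging weakly in $H^1$ to an element of $\B$, which by uniqueness of weak limits must be $\vec u$. Thus $\vec u\in\B$, giving $\overline{\A}\subset\B$. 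I expect no genuine obstacle here: the corollary is essentially bookkeeping on top of Proposition~\ref{pr:limituj}, and the only point deserving a careful word is the lower semicontinuity of $\int_\Om H(|\det D\vec u|)\,\dd\vec x$, which relies on the weak $L^1$ convergence of the (absolute values of the) Jacobians from item \ref{vi)} rather than on any weak-continuity-of-minors argument.
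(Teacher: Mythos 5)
Your proposal is correct and follows precisely the route the paper intends: the paper states Corollary~\ref{cor:Bcompact} as an ``immediate consequence'' of Proposition~\ref{pr:limituj} without spelling out the bookkeeping, and your write-up is exactly that bookkeeping (weak $H^1$ compactness from the $F$-bound plus the Dirichlet condition, then items i)--iv) of the proposition for the structural properties of $\vec u$, weak lower semicontinuity plus item vi) and convexity of $H$ for $E(\vec u)<\infty$, and Lemma~\ref{le:two_facts} to identify $F=E$ on $\A$ for the inclusion $\overline{\A}\subset\B$). No gap, same approach.
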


\section{Lower semicontinuity of \(F\) in \(\B\) and proof of the theorem}\label{sec:lower_semicontinuity}

The key argument to prove the lower semicontinuity of the energy \(F\) is to use a change of variables to express the neo-Hookean energy in terms of the inverse deformations. This allows to control from below the Dirichlet part of the neo-Hookean energy by the \(L^1\) norm of the absolutely continuous part of the  gradient of the inverses with the optimal constant \(2\). We then use the theory of $BV$ functions to obtain the desired lower semicontinuity.

We first start by studying the weak convergence in \(L^1\) of \( \{ \sgn(\det \nabla \vec u_j^{-1})\cof \nabla \vec u_j^{-1} \}_j\) and \( \{ |\det \nabla \vec u_j^{-1}| \}_j \).

\begin{proposition}\label{pr:convergenza det e cof}
Let $\{\vec u_j\}_j$ be a sequence in $\B$. Assume that $\{ F (\vec u_j)\}_j$ is equibounded and that
$\vec u_j \rightharpoonup \vec u$ in $H^1(\Om,\R^3)$.
Then, up to a subsequence, $|\det \nabla \vec u_j^{-1}| \rightharpoonup |\det \nabla \vec u^{-1}|$ in $L^1 (\Om_{\vec b})$ and
\begin{equation*}
\sgn(\det \nabla \vec u_j^{-1})\cof \nabla \vec u^{-1}_j \rightharpoonup \sgn(\det \nabla \vec u^{-1})\cof \nabla \vec u^{-1}
\text{ in } L^1({\Om}_{\vec b},\R^{3\times 3}).
\end{equation*} 
\end{proposition}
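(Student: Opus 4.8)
The plan is to push everything back to the maps $\vec u_j$ through the change of variables formula, and to extract from the coercivity \eqref{eq:explosiveH} of $H$ the $L^1$-equi-integrability on $\Om_\vec b$ needed for weak $L^1$ compactness. After passing to a subsequence, Proposition~\ref{pr:limituj} gives $\vec u\in\B$, $\vec u_j\to\vec u$ a.e.\ in $\Om$, and well-defined inverses $\vec u_j^{-1},\vec u^{-1}$. By Proposition~\ref{prop:relation_inverses} (so that $\nabla\vec u_j^{-1}(\vec u_j(\vec x))=D\vec u_j(\vec x)^{-1}$) together with the matrix identities $\det(\vec A^{-1})=(\det\vec A)^{-1}$ and $\cof(\vec A^{-1})=(\det\vec A)^{-1}\vec A^{T}$ for invertible $\vec A\in\R^{3\times 3}$, we have for a.e.\ $\vec x\in\Om$
\begin{equation*}
|\det\nabla\vec u_j^{-1}(\vec u_j(\vec x))|=\frac{1}{|\det D\vec u_j(\vec x)|},
\qquad
\sgn\bigl(\det\nabla\vec u_j^{-1}(\vec u_j(\vec x))\bigr)\,\cof\nabla\vec u_j^{-1}(\vec u_j(\vec x))=\frac{(D\vec u_j(\vec x))^{T}}{|\det D\vec u_j(\vec x)|},
\end{equation*}
and similarly for $\vec u$. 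Feeding $\vec y\mapsto\psi(\vec y)\,|\det\nabla\vec u_j^{-1}(\vec y)|$ and $\vec y\mapsto\psi(\vec y)\cdot\bigl(\sgn(\det\nabla\vec u_j^{-1}(\vec y))\,\cof\nabla\vec u_j^{-1}(\vec y)\bigr)$ into Corollary~\ref{cor:area-formula} gives, for every $\psi\in C_c(\Om_\vec b)$, respectively $\psi\in C_c(\Om_\vec b,\R^{3\times 3})$,
\begin{equation}\label{eq:cov-inv-prop}
\int_{\Om_\vec b}\psi\,|\det\nabla\vec u_j^{-1}|\,\dd\vec y=\int_\Om\psi\circ\vec u_j\,\dd\vec x,
\qquad
\int_{\Om_\vec b}\psi\cdot\bigl(\sgn(\det\nabla\vec u_j^{-1})\,\cof\nabla\vec u_j^{-1}\bigr)\,\dd\vec y=\int_\Om(\psi\circ\vec u_j)\cdot(D\vec u_j)^{T}\,\dd\vec x,
\end{equation}
together with the same two formulas with $\vec u$ in place of $\vec u_j$. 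In particular $\det\nabla\vec u_j^{-1}\in L^1(\Om_\vec b)$, and taking $\varphi=|\cof\nabla\vec u_j^{-1}|$ in the area formula yields $\int_{\Om_\vec b}|\cof\nabla\vec u_j^{-1}|=\int_\Om|D\vec u_j|<\infty$.

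Next I would prove equi-integrability in $L^1(\Om_\vec b)$ of both $\{|\det\nabla\vec u_j^{-1}|\}_j$ and $\{\cof\nabla\vec u_j^{-1}\}_j$. Set $G(t):=tH(1/t)$ for $t>0$; since $\lim_{s\to0}H(s)=\infty$ by \eqref{eq:explosiveH}, we have $G(t)/t=H(1/t)\to\infty$ as $t\to\infty$. Applying Corollary~\ref{cor:area-formula} to $\vec y\mapsto G(|\det\nabla\vec u_j^{-1}(\vec y)|)$ and using $|\det\nabla\vec u_j^{-1}(\vec u_j(\vec x))|=|\det D\vec u_j(\vec x)|^{-1}$ gives
\begin{equation*}
\int_{\Om_\vec b}G\bigl(|\det\nabla\vec u_j^{-1}|\bigr)\,\dd\vec y=\int_\Om H\bigl(|\det D\vec u_j|\bigr)\,\dd\vec x\le F(\vec u_j),
\end{equation*}
which is bounded uniformly in $j$, so $\{|\det\nabla\vec u_j^{-1}|\}_j$ is equi-integrable by the De la Vall\'ee Poussin criterion. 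For the cofactors, given a Borel set $E\subset\Om_\vec b$, the same computation as for $\int_{\Om_\vec b}|\cof\nabla\vec u_j^{-1}|=\int_\Om|D\vec u_j|$ localises to $\int_E|\cof\nabla\vec u_j^{-1}|=\int_{\vec u_j^{-1}(E)}|D\vec u_j|\le\|D\vec u_j\|_{L^2(\Om)}\,|\vec u_j^{-1}(E)|^{1/2}$, where $\vec u_j^{-1}(E):=\{\vec x\in\Om:\vec u_j(\vec x)\in E\}$; and the second identity in Corollary~\ref{cor:area-formula} with $\psi=|\det D\vec u_j|^{-1}$ and $\varphi=\chi_E$ gives $|\vec u_j^{-1}(E)|=\int_E|\det\nabla\vec u_j^{-1}|\,\dd\vec y$, which tends to $0$ uniformly in $j$ as $|E|\to0$ by the step just proved. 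Since $\sup_j F(\vec u_j)<\infty$ controls $\sup_j\|D\vec u_j\|_{L^2(\Om)}$, the families $\{\cof\nabla\vec u_j^{-1}\}_j$ and $\{\sgn(\det\nabla\vec u_j^{-1})\,\cof\nabla\vec u_j^{-1}\}_j$ are equi-integrable in $L^1(\Om_\vec b,\R^{3\times 3})$.

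Finally I would pass to the limit. By the Dunford--Pettis theorem and the two equi-integrability statements, along a further subsequence $|\det\nabla\vec u_j^{-1}|$ and $\sgn(\det\nabla\vec u_j^{-1})\,\cof\nabla\vec u_j^{-1}$ converge weakly in $L^1(\Om_\vec b)$. Letting $j\to\infty$ in \eqref{eq:cov-inv-prop}: since $\psi$ is bounded, continuous and $\vec u_j\to\vec u$ a.e., dominated convergence yields $\int_\Om\psi\circ\vec u_j\to\int_\Om\psi\circ\vec u$, while $\psi\circ\vec u_j\to\psi\circ\vec u$ strongly in $L^2(\Om)$ and $D\vec u_j\rightharpoonup D\vec u$ weakly in $L^2(\Om,\R^{3\times 3})$, so $\int_\Om(\psi\circ\vec u_j)\cdot(D\vec u_j)^{T}\to\int_\Om(\psi\circ\vec u)\cdot(D\vec u)^{T}$. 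Reading the formulas \eqref{eq:cov-inv-prop} for the limit map $\vec u\in\B$ in the opposite direction identifies these limits with $\int_{\Om_\vec b}\psi\,|\det\nabla\vec u^{-1}|$ and $\int_{\Om_\vec b}\psi\cdot\bigl(\sgn(\det\nabla\vec u^{-1})\,\cof\nabla\vec u^{-1}\bigr)$; since $C_c(\Om_\vec b)$ is a determining class for $L^1(\Om_\vec b)$, the weak $L^1$ limits must equal $|\det\nabla\vec u^{-1}|$ and $\sgn(\det\nabla\vec u^{-1})\,\cof\nabla\vec u^{-1}$, and being independent of the subsequence this proves the claim.

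I expect the main obstacle to be the equi-integrability of $\{|\det\nabla\vec u_j^{-1}|\}_j$: the crux is the change of variables turning $\int_\Om H(|\det D\vec u_j|)\,\dd\vec x$ into $\int_{\Om_\vec b}G(|\det\nabla\vec u_j^{-1}|)\,\dd\vec y$ with $G(t)=tH(1/t)$ superlinear at infinity, which is precisely where the hypothesis \eqref{eq:explosiveH} on $H$ is essential; once this is in place, the cofactor estimate follows from it by Cauchy--Schwarz, and the identification of the weak limits is a routine weak--strong convergence argument.
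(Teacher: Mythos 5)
Your proof is correct and follows essentially the same route as the paper: change of variables via Corollary~\ref{cor:area-formula} to transfer to $\vec u_j$, the De la Vall\'ee Poussin criterion with $G(t)=tH(1/t)$ (the paper's $H_1$) for equi-integrability of the Jacobians of the inverses, localisation of the cofactor integral to $\vec u_j^{-1}(E)$ for its equi-integrability, and then identification of the weak limits by testing against continuous functions and using $\vec u_j\to\vec u$ a.e.\ together with $D\vec u_j\rightharpoonup D\vec u$ in $L^2$. The only cosmetic difference is that you bound $\int_{\vec u_j^{-1}(E)}|D\vec u_j|$ by Cauchy--Schwarz against $|\vec u_j^{-1}(E)|^{1/2}$, whereas the paper invokes the $L^1$-equi-integrability of $\{D\vec u_j\}_j$ directly; both exploit the same uniform $L^2$ bound and are equivalent.
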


\begin{proof}
The proof is divided into two steps.

\emph{Step 1: Equiintegrability.}
We first show that $\{ |\det \nabla\vec u_j^{-1}| \}_j$ is equiintegrable.
By Corollary \ref{cor:area-formula} and Proposition \ref{prop:relation_inverses},
\begin{equation*}
\int_\Om H(|\det D \vec  u_j(\vec x)|) \, \dd \vec x
%& =\int_\Om H( \det D\vec u_j (\vec u_j^{-1}(\vec y))) \det \nabla \vec u_j^{-1}(\vec y)\, \dd \vec y =
 =\int_{\Om_{\vec b}} H \biggl( \biggl |\frac{1}{\det \nabla \vec u_j^{-1} (\vec y)} \biggr |\biggr) |\det \nabla \vec u_j^{-1}(\vec y)|\, \dd \vec y.
\end{equation*}
Since \( H_1(t):= H(1/t)t\) is convex and satisfies \eqref{eq:explosiveH}, the De la Vall\'ee Poussin criterion shows that $\{ |\det \nabla\vec u_j^{-1} |\}_j$ is equiintegrable, as $\{ F (\vec u_j) \}_j$ is bounded.
 
 Let $V\subset\Om_{\vec b}$ be a Borel set. On the one hand, using Corollary \ref{cor:area-formula} and formula $\adj \vec A^{-1} \det \vec A = \vec A$ valid for $A\in \R^{3 \times 3}_+$, we get
\begin{equation}\label{eq:adju-1}
    \int_V |\adj \nabla \vec u^{-1}_j(\vec y)| \, \dd\vec y = 
    \int_{\vec u^{-1}_j(V)} |D\vec u_j(\vec x)| \, \dd\vec x.
 \end{equation}
 On the other hand,
 \begin{equation*}
  |\vec u^{-1}_j(V)| = \int_V |\det \nabla \vec u^{-1}_j(\vec y) |\, \dd\vec y.
 \end{equation*}
 Take a fixed $\e>0$. Since the sequence $\{D\vec u_j\}_j$ is equiintegrable, there exists $\delta>0$ such that for any measurable 
 $A\subset \Omega$ with $|A|<\delta$ we have that for all $j\in \N$ 
 \begin{equation*}
    \int_A |D\vec u_j(\vec x)| \, \dd \vec x < \e.
 \end{equation*}
 We apply this to $A=\vec u^{-1}_j(V)$, complementing it with the equiintegrability of $\{ |\det \nabla\vec u^{-1}_j| \}_j$, 
 which gives that there exists $\eta>0$ such that 
 \begin{equation*}
  \text{if}\ |V| < \eta \ \text{then}\ \int_V |\det \nabla \vec u^{-1}_j(\vec y)| \, \dd\vec y < \delta.
 \end{equation*}
This shows that the sequence $\{ |\adj \nabla \vec u^{-1}_j| \}_j$ is equiintegrable.

\smallskip
\emph{Step 2: Weak$^*$ convergence in the sense of measures.}
By Corollary \ref{cor:Bcompact} the limit $\vec u$ belongs to $\B$. 
Let $\psi \in C (\R^3)$ be bounded. Up to a subsequence, \( \vec u_j\rightarrow \vec u\) a.e.\ Hence by the dominated 
convergence theorem $\psi \circ \vec u_j \to \psi \circ \vec u$ in $L^q$ for all $q < \infty$. 
Then, by Corollary \ref{cor:area-formula} and Proposition \ref{prop:relation_inverses}, we get
\begin{equation*}
\int_{\Om_{\vec b}} |\det \nabla\vec u^{-1}_j (\vec y)| \psi ( \vec y) \, \dd\vec y 
= \int_\Omega \psi(\vec u_j(\vec x))\, \dd \vec x\to
\int_\Omega \psi(\vec u(\vec x))\, \dd \vec x
=\int_{\Om_{\vec b}} |\det \nabla\vec u^{-1}(\vec y)| \psi ( \vec y) \, \dd\vec y.
\end{equation*}
In the right-hand side we used the fact that $\vec u\in\B$ to return to the deformed configuration.
This proves the convergence of  $|\det \nabla \vec u_j^{-1}|$ in the sense of measures, and then
in $L^1$ due to the equiintegrability. 

Similarly, let $\vec\psi \in C (\R^3, \R^{3 \times 3})$ be bounded. 
As in \eqref{eq:adju-1},
\begin{equation}\label{eq:adju-1bis}
\int_{\Om_{\vec b}} \sgn(\det \nabla \vec u_j^{-1}(\vec y))\adj \nabla \vec u^{-1}_j (\vec y) \cdot \vec \psi ( \vec y) \, \dd\vec y 
= \int_\Omega D\vec u_j (\vec x)\cdot \vec \psi(\vec u_j(\vec x))\, \dd \vec x.
\end{equation}
Using the fact that $\vec \psi \circ \vec u_j \to \vec\psi \circ \vec u$ in $L^2$, we have
\[
 \int_\Omega D\vec u_j (\vec x)\cdot \vec \psi(\vec u_j(\vec x))\, \dd \vec x \to \int_\Omega D\vec u (\vec x)\cdot \vec \psi(\vec u(\vec x))\, \dd \vec x .
\]
Since $\vec u\in\B$, formula \eqref{eq:adju-1bis} also holds for $\vec u$ in place of $\vec u_j$:
\begin{equation*}
\int_{\Om } D\vec u(\vec x)\cdot \vec\psi(\vec u(x)) 
=\int_{\Om_{\vec b}} \sgn(\det \nabla \vec u^{-1} (\vec y)) \adj \nabla \vec u^{-1}(\vec y)\cdot \vec\psi(\vec y) \, \dd \vec y.
\end{equation*} 
Hence,
\begin{equation*}
  \int_{\Om_{\vec b}}  \sgn(\det \nabla \vec u_j^{-1})\adj \nabla \vec u^{-1}_j (\vec y) \cdot \vec \psi ( \vec y) \, \dd\vec y \to  \int_{\Om_{\vec b}} \sgn(\det \nabla \vec u^{-1} (\vec y))\adj \nabla\vec u^{-1}(\vec y)\cdot \vec \psi ( \vec y) \, \dd\vec y .
\end{equation*} 
This proves the convergence of $ \sgn(\det \nabla \vec u_j^{-1})\adj \nabla \vec u^{-1}_j (\vec y)$ in the sense of measures,
and then in $L^1$ due to the equiintegrability. 
\end{proof}

The following lemma presents three matrix inequalities.
We first prove inequality \eqref{eq:sqrt3} and show that the constant is optimal.
However, we would need a constant $2$ instead of $\sqrt{3}$, since this would provide the right constant in front of the total variation in \eqref{eq:F}, as explained in the introduction and as shown by the construction in \cite{BaHEMoRo_24PUB}.

\begin{lemma}\label{le:better than 2}
\begin{enumerate}[label=\alph*)]
\item\label{item:sqrt3} $|\vec A|^2 \geq \sqrt{3} |\cof \vec A|$ for all $\vec A \in \R^{3 \times 3}$, with optimal constant.

\item\label{item:better2A} $|\vec A|^2 \geq 2 |\cof \vec A| - 2\max\{1, \det \vec A \}$ for all $\vec A \in \R^{3 \times 3}_+$.

\item\label{item:better2B}
$\frac{|\cof\vec B|^2}{\det \vec B} \geq 2 |\vec B| - 2\max\{1, \det \vec B\}$ for all $\vec B \in \R^{3 \times 3}_+$.
\end{enumerate}
\end{lemma}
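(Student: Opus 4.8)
The plan is to settle \ref{item:sqrt3} by a one-line computation and to reduce both \ref{item:better2A} and \ref{item:better2B}, after passing to singular values, to a single scalar inequality in three nonnegative reals. For \ref{item:sqrt3}, write the singular values of $\vec A$ as $\sigma_1,\sigma_2,\sigma_3\ge0$, so that $|\vec A|^2=\sigma_1^2+\sigma_2^2+\sigma_3^2$ and, the singular values of $\cof\vec A$ being $\sigma_2\sigma_3,\sigma_3\sigma_1,\sigma_1\sigma_2$, also $|\cof\vec A|^2=\sigma_1^2\sigma_2^2+\sigma_2^2\sigma_3^2+\sigma_3^2\sigma_1^2$. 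With $a=\sigma_1^2$, $b=\sigma_2^2$, $c=\sigma_3^2$ the claim $|\vec A|^4\ge3|\cof\vec A|^2$ is $(a+b+c)^2\ge3(ab+bc+ca)$, i.e.\ $a^2+b^2+c^2\ge ab+bc+ca$, which is $\tfrac12[(a-b)^2+(b-c)^2+(c-a)^2]\ge0$; equality forces $\sigma_1=\sigma_2=\sigma_3$ and is realised at $\vec A=\mathrm{Id}$, so $\sqrt3$ is optimal.

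The heart of the matter is the inequality
\begin{equation*}
a+b+c+2\max\{1,\sqrt{abc}\}\ \ge\ 2\sqrt{ab+bc+ca}\qquad\text{for all } a,b,c\ge0.\tag{$\star$}
\end{equation*}
Set $m:=\max\{1,\sqrt{abc}\}\ge1$. As both sides of $(\star)$ are nonnegative, squaring shows $(\star)$ is equivalent to $a^2+b^2+c^2+4m(a+b+c)+4m^2\ge2(ab+bc+ca)$, and by symmetry we may relabel so that $c=\min\{a,b,c\}$. The key identity is
\begin{equation*}
a^2+b^2+c^2+4m(a+b+c)+4m^2-2(ab+bc+ca)=(a-b)^2+(c+2m)^2+(4m-2c)(a+b).
\end{equation*}
If $c\le2m$ the three summands on the right are nonnegative, and we are done. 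If $c>2m$, then $c>2$ (because $m\ge1$), hence $a,b\ge c>2$, so $abc>8$ and therefore $m=\sqrt{abc}$; but then $c>2\sqrt{abc}$ gives $c^2>4abc$, i.e.\ $c>4ab$, contradicting $4ab\ge4c^2>c$. Thus $c\le2m$ always, and $(\star)$ holds.

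Now \ref{item:better2A} follows by applying $(\star)$ to $a=\sigma_1^2$, $b=\sigma_2^2$, $c=\sigma_3^2$, where $\sigma_1,\sigma_2,\sigma_3>0$ are the singular values of $\vec A\in\R^{3\times3}_+$: then $\sqrt{abc}=\sigma_1\sigma_2\sigma_3=\det\vec A$, $a+b+c=|\vec A|^2$, $ab+bc+ca=|\cof\vec A|^2$, so $(\star)$ reads $|\vec A|^2+2\max\{1,\det\vec A\}\ge2|\cof\vec A|$. For \ref{item:better2B}, let $\tau_1,\tau_2,\tau_3>0$ be the singular values of $\vec B$ and set $x=\tau_1\tau_2/\tau_3$, $y=\tau_2\tau_3/\tau_1$, $z=\tau_3\tau_1/\tau_2$; a direct computation gives $x+y+z=|\cof\vec B|^2/\det\vec B$, $xy+yz+zx=|\vec B|^2$ and $xyz=\det\vec B$. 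Applying $(\star)$ to $(x,y,z)$ and using the elementary bound $\max\{1,t\}\ge\max\{1,\sqrt t\}$ with $t=\det\vec B$ gives $|\cof\vec B|^2/\det\vec B+2\max\{1,\det\vec B\}\ge2|\vec B|$.

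The main obstacle is $(\star)$; everything else is bookkeeping with the singular value decomposition. The two delicate points in $(\star)$ are spotting the identity that writes the squared inequality as a sum of three manifestly nonnegative terms once the smallest variable is singled out, and checking that the sole potentially bad configuration $c>2m$ is impossible — which is exactly where the information carried by $m=\max\{1,\sqrt{abc}\}$ (namely $m\ge1$, and $m=\sqrt{abc}$ when $abc$ is large) enters.
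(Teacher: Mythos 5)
Your proof is correct. Part~\ref{item:sqrt3} is identical to the paper's. For parts~\ref{item:better2A} and~\ref{item:better2B} you take a genuinely different route: you distill both into a single scalar inequality $(\star)$ for nonnegative reals, prove $(\star)$ by squaring and writing the result as $(a-b)^2+(c+2m)^2+(4m-2c)(a+b)$ with $c$ the minimum, then rule out $c>2m$ by contradiction, and finally feed in two different singular-value substitutions (namely $\sigma_i^2$ for~\ref{item:better2A} and the quotients $\tau_i\tau_j/\tau_k$ for~\ref{item:better2B}, the latter followed by $\max\{1,t\}\geq\max\{1,\sqrt t\}$). The paper instead proves~\ref{item:better2A} more directly: ordering $v_1\leq v_2\leq v_3$, it uses the identity $|\vec A|^4-4|\cof\vec A|^2=(v_1^2+v_2^2-v_3^2)^2-4v_1^2v_2^2$, so that $|\vec A|^4\geq 4|\cof\vec A|^2-4v_1^2v_2^2$, and then the elementary fact $\sqrt{a^2-b^2}\geq a-b$ for $a\geq b\geq 0$ yields $|\vec A|^2\geq 2|\cof\vec A|-2v_1v_2$, with a short case analysis on whether $v_1v_2\leq 1$; part~\ref{item:better2B} is then deduced from~\ref{item:better2A} by the substitution $\vec A=\vec B^{-1}$ together with $\cof\vec A=\vec B^T/\det\vec B$. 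The paper's argument is a bit shorter and gives the sharper intermediate bound $|\vec A|^2\geq 2|\cof\vec A|-2v_1v_2$ (with $v_1v_2$ the product of the two smallest singular values, which is $\leq\max\{1,\det\vec A\}$); your $(\star)$ is a clean unifying statement that covers both parts without needing the inverse substitution, at the price of the extra case analysis and the extra estimate $\max\{1,t\}\geq\max\{1,\sqrt t\}$ in part~\ref{item:better2B}.
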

\begin{proof}
For \ref{item:sqrt3} and \ref{item:better2A}, we note that the three terms $|\vec A|$, $|\cof \vec A|$ and $\det \vec A$ are invariant under multiplication by rotations.
Therefore, by singular value decomposition, we can assume that $\vec A$ is diagonal with positive diagonal elements $v_1\leq v_2 \leq v_3$.

The inequality of \ref{item:sqrt3} is equivalent to $|\vec A|^4 - 3 |\cof \vec A|^2 \geq 0$, and we have
\begin{align*}
 |\vec A|^4 - 3 |\cof \vec A|^2 & = \left( v_1^2 + v_2^2 + v_3^2 \right)^2 - 3 \left( v_2^2 v_3^2 + v_1^2 v_3^2 + v_1^2 v_2^2 \right) \\
 & = \frac{1}{2} \left[ (v_1^2 - v_2^2)^2 + (v_1^2 - v_3^2)^2 + (v_2^2 - v_3^2)^2 \right] \geq 0 .
\end{align*}
That the constant is optimal can be seen by considering $\vec A = \vec I$.

For \ref{item:better2A} we have
\begin{align*}
 |\vec A|^4 - 4 |\cof \vec A|^2 = \left( v_1^2 + v_2^2 + v_3^2 \right)^2 - 4 \left( v_2^2 v_3^2 + v_1^2 v_3^2 + v_1^2 v_2^2 \right) = \left (v_1^2 + v_2^2 - v_3^2\right)^2 -4 v_1^2 v_2^2 ,
\end{align*}
so
\[
 |\vec A|^4 \geq 4 |\cof \vec A|^2 - 4 v_1^2 v_2^2 .
\]
Taking into account that if $a\geq b\geq0$ then $\sqrt{a^2-b^2}\geq a-b$, from the inequality above we get
\begin{equation*}
|\vec A|^2 \geq 2 |\cof \vec A| - 2 v_1 v_2.
\end{equation*}
If $ v_1 v_2 \leq 1$ then we are done.
If $v_1 v_2 > 1$, then $v_3 \geq v_2 >1$ and therefore
\[
 1 < v_1 v_2 < v_1 v_2 v_3 = \det\vec A = \max\{1, \det \vec A \}.
\]
This shows \ref{item:better2A}. By taking
$\vec A=\vec B^{-1}=\frac{\adj \vec B}{\det \vec B}$, since $\cof\vec A=\frac{\vec B^T}{\det \vec B}$, we get from \ref{item:better2A} that 
\begin{equation*}
\frac{|\cof\vec B|^2}{(\det \vec B)^2} \geq 2 \frac{|\vec B|}{\det \vec B} - 2\max\{1, \frac{1}{\det \vec B}\},
\end{equation*}
and therefore \ref{item:better2B}.
\end{proof}

\medskip
We are now ready to prove the lower semicontinuity.
\begin{proposition}\label{prop:semicontinuity}
Let \(\{\vec u_j\}_j\) be a sequence in \(\B\) such that $\{ F (\vec u_j)\}_j$ is equibounded and \(\vec u_j \rightharpoonup \vec u\) in \(H^1(\Om,\R^3)\). Then
\begin{equation*}
\liminf_{j \to \infty} F(\vec u_j)\geq F(\vec u).
\end{equation*}
\end{proposition}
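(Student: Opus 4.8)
The plan is to recast $F(\vec u_j)$ as a functional of the inverse deformation $\vec v_j:=\vec u_j^{-1}$ and then argue by lower semicontinuity in $BV(\Om_{\vec b},\R^3)$. Applying the change of variables of Corollary~\ref{cor:area-formula} with $\psi=\frac{|D\vec u_j|^2}{|\det\nabla\vec u_j|}$ and $\varphi\equiv1$, together with $D\vec u_j(\vec u_j^{-1}(\vec y))=\nabla\vec v_j(\vec y)^{-1}$ and $\det\nabla\vec u_j(\vec u_j^{-1}(\vec y))=\frac1{\det\nabla\vec v_j(\vec y)}$ from Proposition~\ref{prop:relation_inverses} and the identity $|\vec A^{-1}|=\frac{|\cof\vec A|}{|\det\vec A|}$, one gets
\[
\int_\Om|D\vec u_j|^2\,\dd\vec x=\int_{\Om_{\vec b}}\frac{|\cof\nabla\vec v_j|^2}{|\det\nabla\vec v_j|}\,\dd\vec y ,
\]
so that
\[
F(\vec u_j)=\int_\Om H(|\det D\vec u_j|)\,\dd\vec x+\int_{\Om_{\vec b}}\frac{|\cof\nabla\vec v_j|^2}{|\det\nabla\vec v_j|}\,\dd\vec y+2\|D^s\vec v_j\| .
\]
By Proposition~\ref{pr:limituj} we may pass to a subsequence so that $\vec v_j\weakcs\vec v:=\vec u^{-1}$ in $BV(\Om_{\vec b},\R^3)$ and $|\det D\vec u_j|\rightharpoonup|\det D\vec u|$ in $L^1(\Om)$, and by Proposition~\ref{pr:convergenza det e cof} that $\sgn(\det\nabla\vec v_j)\cof\nabla\vec v_j\rightharpoonup\sgn(\det\nabla\vec v)\cof\nabla\vec v$ and $|\det\nabla\vec v_j|\rightharpoonup|\det\nabla\vec v|$ in $L^1(\Om_{\vec b})$, with $\{|\det\nabla\vec v_j|\}_j$ equiintegrable.

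Since $H$ is convex and nonnegative, $w\mapsto\int_\Om H(w)\,\dd\vec x$ is sequentially weakly lower semicontinuous on $L^1(\Om)$ (see, e.g., \cite{FoLe07}), hence $\liminf_j\int_\Om H(|\det D\vec u_j|)\,\dd\vec x\ge\int_\Om H(|\det D\vec u|)\,\dd\vec x$. As the three summands of $F(\vec u_j)$ are nonnegative, it remains to prove
\[
\liminf_{j\to\infty}\left(\int_{\Om_{\vec b}}\frac{|\cof\nabla\vec v_j|^2}{|\det\nabla\vec v_j|}\,\dd\vec y+2\|D^s\vec v_j\|\right)\ge\int_{\Om_{\vec b}}\frac{|\cof\nabla\vec v|^2}{|\det\nabla\vec v|}\,\dd\vec y+2\|D^s\vec v\| .
\]

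For this I would exploit two ingredients. First, $(\vec C,t)\mapsto\frac{|\vec C|^2}{t}$ is the perspective of a convex function, hence jointly convex and lower semicontinuous on $\R^{3\times3}\times[0,\infty)$, so $(\vec C,t)\mapsto\int\frac{|\vec C|^2}{t}$ is lower semicontinuous along the weak $L^1$ convergences of $\sgn(\det\nabla\vec v_j)\cof\nabla\vec v_j$ and $|\det\nabla\vec v_j|$ given by Proposition~\ref{pr:convergenza det e cof}. Second, Lemma~\ref{le:better than 2}~\ref{item:better2B} gives the pointwise bound $\frac{|\cof\vec A|^2}{|\det\vec A|}\ge2|\vec A|-2\max\{1,|\det\vec A|\}$, which says that the bulk integrand grows only linearly, exactly like $2|\vec A|$, in the rank-one directions---that is, the density appearing in front of $\|D^s\vec v_j\|$. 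Concretely, splitting $\Om_{\vec b}=G_k\cup E_k$ by a biting-type argument with $|E_k|\to0$ and $\{\nabla\vec v_j\chi_{G_k}\}_j$ equiintegrable, on $G_k$ the perspective lower semicontinuity handles $\int_{G_k}\frac{|\cof\nabla\vec v_j|^2}{|\det\nabla\vec v_j|}$, while on $E_k$ one bounds $\int_{E_k}\frac{|\cof\nabla\vec v_j|^2}{|\det\nabla\vec v_j|}\ge2\int_{E_k}|\nabla\vec v_j|-2\int_{E_k}\max\{1,|\det\nabla\vec v_j|\}$, the last term being $o_k(1)$ uniformly in $j$ because $\{|\det\nabla\vec v_j|\}_j$ is equiintegrable and $|E_k|\to0$; thus the concentrating part of $\nabla\vec v_j$ is absorbed into the Dirichlet term with exactly the constant $2$, and combined with $2\|D^s\vec v_j\|$ it reconstructs, up to $o_k(1)$, the total variation $2|D\vec v_j|(\Om_{\vec b})$, which is lower semicontinuous for the weak-$*$ convergence in $BV$. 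Sending $j\to\infty$ and then $k\to\infty$ gives the inequality, and adding the $H$-term yields $\liminf_j F(\vec u_j)\ge F(\vec u)$.

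The delicate point is this last argument: the bulk density $\frac{|\cof\vec A|^2}{|\det\vec A|}$ is linearly growing along rank-one matrices but superlinearly (indeed with recession $+\infty$) in all other directions, so $\int\frac{|\cof\nabla\vec v|^2}{|\det\nabla\vec v|}+2\|D^s\vec v\|$ is not of the standard ``energy density plus recession function'' form to which off-the-shelf $BV$ lower semicontinuity theorems apply; reconciling this mismatch is precisely where the sharpness of the constant $2$ in Lemma~\ref{le:better than 2}~\ref{item:better2B} and the extra integrability of the Jacobians of the inverses from Proposition~\ref{pr:convergenza det e cof} are both needed.
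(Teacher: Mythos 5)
Your overall skeleton is the same as the paper's: rewrite $\int_\Om|D\vec u_j|^2$ as $\int_{\Om_{\vec b}}\frac{|\cof\nabla\vec v_j|^2}{|\det\nabla\vec v_j|}$ via the area formula, handle the $H$-term by convexity, use the joint convexity of $(\vec C,t)\mapsto\frac{|\vec C|^2}{t}$ together with the weak $L^1$ convergences of Proposition~\ref{pr:convergenza det e cof} for the bulk, and use Lemma~\ref{le:better than 2}~\ref{item:better2B} plus the equiintegrability of $\{|\det\nabla\vec v_j|\}_j$ to absorb the concentrating part with constant $2$. The gap is in how you choose the ``bad set.''

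You take $E_k$ from a biting argument for $\{\nabla\vec v_j\}_j$, so that $\{\nabla\vec v_j\chi_{G_k}\}_j$ is equiintegrable. This does not serve the argument: the perspective lower semicontinuity only needs the weak $L^1$ convergences already supplied by Proposition~\ref{pr:convergenza det e cof} and the convexity theorem (e.g.\ \cite[Theorem~5.14]{FoLe07}), not equiintegrability of $\nabla\vec v_j$. More importantly, the biting sets have no reason to contain the singular set $V$ of $D\vec u^{-1}$, and that is the property you need. When you combine $2\int_{E_k}|\nabla\vec v_j|$ with $2\|D^s\vec v_j\|$ you do \emph{not} reconstruct $2|D\vec v_j|(\Om_{\vec b})$ up to $o_k(1)$ (that sum is missing $2\int_{G_k}|\nabla\vec v_j|$, which is not small); the best you can say is $2\int_{E_k}|\nabla\vec v_j|+2\|D^s\vec v_j\|\geq 2|D\vec v_j|(E_k)$ once $E_k$ is open. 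Passing to the limit via weak-$*$ lower semicontinuity on open sets gives $\liminf_j 2|D\vec v_j|(E_k)\geq 2|D\vec v|(E_k)$, but if $V\not\subset E_k$ there is no way to bound $|D\vec v|(E_k)$ from below by $\|D^s\vec v\|$; in fact, if the $E_k$ stay away from $V$, $|D\vec v|(E_k)\to 0$ and the singular part is lost entirely. The correct choice, as in the paper, is to take $V_\e$ to be a small \emph{open neighbourhood of the singular set $V$ of $D\vec u^{-1}$} with $|V_\e|<\e$ and $\int_{V_\e}|\det\nabla\vec v_j|<\e$ for all $j$ (possible precisely because $\{|\det\nabla\vec v_j|\}_j$ is equiintegrable). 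Then $|D\vec v|(V_\e)\geq|D\vec v|(V)=\|D^s\vec v\|$ closes the argument, and no biting is needed.
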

\begin{proof}
Consider the following functional defined for maps \(\vec w\) that are inverses of maps in \(\B\):
\begin{equation*}
\hat E(\vec w) := \int_{\Om_{\vec b}} \frac{|\cof \nabla \vec w|^2}{|\det \nabla \vec w|}\, \dd \vec y.
 \end{equation*}
By Corollary \ref{cor:area-formula} and Proposition \ref{prop:relation_inverses}, for \(\vec u \in \B\) we have $\int_\Om| D\vec u|^2 \, \dd \vec x=\hat E(\vec u^{-1})$.

By Corollary \ref{cor:Bcompact} the limit $\vec u$ belongs to $\B$. Let $V$ be the singular set of $D\vec u^{-1}$, i.e., $V \subset \Om_{\vec b}$ 
is any Borel set with $|V| = 0$ such that $ | D^s\vec u^{-1} | (\Om_{\vec b} \setminus V) = 0$.
As shown in Proposition \ref{pr:convergenza det e cof}, $\{ \det \nabla\vec u_j^{-1} \}_j$ is equiintegrable.
For $\e>0$, let $V_\e$ an open neighbourhood of $V$ such that $|V_\e|<\e$ and $\int_{V_\e}  |\det \nabla \vec u_j^{-1}|<\e$ for any $j\in \N$.

Next, by Lemma \ref{le:better than 2} \ref{item:better2B} we have
\begin{equation*}
\hat E(\vec u_j^{-1})
\geq \int_{\Om_{\vec b}\setminus V_\e} \frac{|\cof \nabla \vec u_j^{-1}|^2}{|\det \nabla \vec u_j^{-1}|} \, \dd \vec y
+2\int_{V_\e} | \nabla \vec u_j^{-1}|\, \dd \vec y- 4\e
 \end{equation*}
and, hence (recalling \eqref{eq:neo_Hook_energy} and \eqref{eq:F}),
\begin{align*}
 F (\vec u_j) & = 
 \hat E(\vec u_j^{-1}) + \int_{\Om} H (|\det D \vec u_j|) \, \dd \vec x + 2 \|D^s \vec u_j^{-1}\| \\
 & \geq \int_{\Om_{\vec b}\setminus V_\e} \frac{|\cof \nabla \vec u_j^{-1}|^2}{|\det \nabla \vec u_j^{-1}|}\, \dd \vec y + \int_{\Om} H (|\det D \vec u_j|) \, \dd \vec x 
+2 | D\vec u_j^{-1} |(V_\e)- 4\e .
\end{align*}
Let \(h : (0, \infty)^2 \to \R\) be defined as $h (a, b) = \frac{a^2}{b}$.
Since it is convex in $(a,b)$ and increasing in $a$, 
the map $\R^{3\times 3} \times (0,\infty) \ni (\vec M,b)\mapsto h(|\vec M|,b)$ is convex. 
Then, by writing
\begin{align*}
\int_{\Om_{\vec b}\setminus V_\e} \frac{|\cof \nabla \vec u_j^{-1}|^2}{|\det \nabla \vec u_j^{-1}|}\, \dd \vec y 
=\int_{\Om_{\vec b}\setminus V_\e} h(|\cof \nabla \vec u_j^{-1}|, |\det \nabla \vec u_j^{-1}|)\, \dd \vec y 
\end{align*}
and recalling that by Proposition \ref{pr:convergenza det e cof}
$\sgn(\det \nabla \vec u_j^{-1})\cof \nabla \vec u^{-1}_j 
\rightharpoonup \sgn(\det \nabla \vec u^{-1})\cof \nabla \vec u^{-1}$ in $L^1({\Om}_{\vec b},\R^{3\times 3})$ 
and $|\det \nabla \vec u^{-1}_j| \rightharpoonup  |\det \nabla \vec u^{-1}|$ in $L^1(\Om_{\vec b})$,
we find by the lower semicontinuity of convex functionals, see e.g.\  \cite[Theorem 5.14]{FoLe07}, that %\cite[Theorem 1, p.12]{GiMoSo98II}, that
\begin{equation*}
\liminf_{j \to \infty}
\int_{\Om_{\vec b}\setminus V_\e} \frac{|\cof \nabla \vec u_j^{-1}|^2}{|\det \nabla \vec u_j^{-1}|}\, \dd \vec y \geq
\int_{\Om_{\vec b}\setminus V_\e} \frac{|\cof \nabla \vec u^{-1}|^2}{|\det \nabla \vec u^{-1}|}\, \dd \vec y.
\end{equation*}

Besides,  by  Proposition \ref{pr:limituj} we also have $\vec u_j^{-1} \weakcs \vec u^{-1}$ in $BV (\Om_{\vec b}, \R^3)$
and $|\det D \vec u_j| \rightharpoonup |\det D \vec u|$ in $L^1(\Om)$.
Hence, by using the convexity of $H$, we obtain
\[
 \liminf_{j\to\infty} F(\vec u_j) \geq \int_{\Om_{\vec b}\setminus V_\e} \frac{|\cof \nabla \vec u^{-1}|^2}{|\det \nabla \vec u^{-1}|}\, \dd \vec y + \int_{\Om} H (|\det D \vec u|) \, \dd \vec x
+2 | D\vec u^{-1} |(V_\e)- 4\e .
\]
By using that $| D\vec u^{-1} |(V_\e) \geq | D\vec u^{-1} |(V) = \| D^s \vec u^{-1} \|$, sending $\e$ to zero and going back to the reference configuration in the integral in $\Om_{\vec b}$, we conclude.
\end{proof}

With Corollary \ref{cor:Bcompact} and Proposition \ref{prop:semicontinuity} the proof of Theorem \ref{main theorem}
is immediate.

{\small
\bibliography{biblio} \bibliographystyle{siam}

\begin{thebibliography}{10}

\bibitem{AmFuPa00}
{\sc L.~Ambrosio, N.~Fusco, and D.~Pallara}, {\em Functions of bounded
  variation and free discontinuity problems}, Oxford University Press, New
  York, 2000.

\bibitem{Ball77}
{\sc J.~M. Ball}, {\em Convexity conditions and existence theorems in nonlinear
  elasticity}, Arch. Rational Mech. Anal., 63 (1977), pp.~337--403.

\bibitem{BallMurat1984}
{\sc J.~M. Ball and F.~Murat}, {\em {$W^{1,p}$}-quasiconvexity and variational
  problems for multiple integrals}, J. Funct. Anal., 58 (1984), pp.~225--253.

\bibitem{BaHeMo17}
{\sc M.~Barchiesi, D.~Henao, and C.~Mora-Corral}, {\em Local invertibility in
  {S}obolev spaces with applications to nematic elastomers and
  magnetoelasticity}, Arch. Rational Mech. Anal., 224 (2017), pp.~743--816.

\bibitem{BaHEMoRo_23PUB}
{\sc M.~Barchiesi, D.~Henao, C.~Mora-Corral, and R.~Rodiac}, {\em Harmonic
  dipoles and the relaxation of the neo-{H}ookean energy in 3{D} elasticity},
  Arch. Ration. Mech. Anal., 247 (2023).
\newblock Paper 70, 46.

\bibitem{BaHEMoRo_24PUB}
\leavevmode\vrule height 2pt depth -1.6pt width 23pt, {\em On the lack of
  compactness in the axisymmetric neo-{H}ookean model}, Forum of Mathematics,
  Sigma, 12 (2024), pp.~1--70.
\newblock Paper e26.

\bibitem{BeBrCo90}
{\sc F.~Bethuel, H.~Brezis, and J.-M. Coron}, {\em Relaxed energies for
  harmonic maps}, in Variational methods ({P}aris, 1988), vol.~4 of Progr.
  Nonlinear Differential Equations Appl., Birkh\"{a}user Boston, Boston, MA,
  1990, pp.~37--52.

\bibitem{BrFrMo24}
{\sc M.~Bresciani, M.~Friedrich, and C.~Mora-Corral}, {\em Variational models
  with {E}ulerian-{L}agrangian formulation allowing for material failure}.
\newblock ArXiv preprint 2402.12870, 2024.

\bibitem{BrCoLi86}
{\sc H.~Brezis, J.-M. Coron, and E.~H. Lieb}, {\em Harmonic maps with defects},
  Comm. Math. Phys., 107 (1986), pp.~649--705.

\bibitem{Brezis_Nirenberg_1995}
{\sc H.~Brezis and L.~Nirenberg}, {\em Degree theory and {BMO}. {I}. {C}ompact
  manifolds without boundaries}, Selecta Math. (N.S.), 1 (1995), pp.~197--263.

\bibitem{CoDeLe03}
{\sc S.~Conti and C.~De~Lellis}, {\em Some remarks on the theory of elasticity
  for compressible {N}eohookean materials}, Ann. Sc. Norm. Super. Pisa Cl. Sci.
  (5), 2 (2003), pp.~521--549.

\bibitem{Dolezalova_Hencl_Maly_2023}
{\sc A.~Dole\v{z}alov\'{a}, S.~Hencl, and J.~Mal\'{y}}, {\em Weak limit of
  homeomorphisms in {$W^{1,n-1}$} and ({INV}) condition}, Arch. Ration. Mech.
  Anal., 247 (2023).
\newblock Paper No. 80, 54.

\bibitem{EvGa92}
{\sc L.~C. Evans and R.~F. Gariepy}, {\em Measure theory and fine properties of
  functions}, CRC Press, Boca Raton, FL, 1992.

\bibitem{Federer69}
{\sc H.~Federer}, {\em Geometric measure theory}, Springer, New York, 1969.

\bibitem{FoLe07}
{\sc I.~Fonseca and G.~Leoni}, {\em Modern methods in the calculus of
  variations: {$L^p$} spaces}, Springer Monographs in Mathematics, Springer,
  New York, 2007.

\bibitem{Hardt_Lin_1986}
{\sc R.~Hardt and F.-H. Lin}, {\em A remark on {$H^1$} mappings}, Manuscripta
  Math., 56 (1986), pp.~1--10.

\bibitem{HeMo10}
{\sc D.~Henao and C.~Mora-Corral}, {\em Invertibility and weak continuity of
  the determinant for the modelling of cavitation and fracture in nonlinear
  elasticity}, Arch. Rational Mech. Anal, 197 (2010), pp.~619--655.

\bibitem{HeMo11}
\leavevmode\vrule height 2pt depth -1.6pt width 23pt, {\em Fracture surfaces
  and the regularity of inverses for {BV} deformations}, Arch. Rational Mech.
  Anal., 201 (2011), pp.~575--629.

\bibitem{HeMo12}
\leavevmode\vrule height 2pt depth -1.6pt width 23pt, {\em Lusin's condition
  and the distributional determinant for deformations with finite energy}, Adv.
  Calc. Var., 5 (2012), pp.~355--409.

\bibitem{HeMo15}
\leavevmode\vrule height 2pt depth -1.6pt width 23pt, {\em Regularity of
  inverses of {S}obolev deformations with finite surface energy}, J. Funct.
  Anal., 268 (2015), pp.~2356--2378.

\bibitem{HeRo18}
{\sc D.~Henao and R.~Rodiac}, {\em On the existence of minimizers for the
  neo-{H}ookean energy in the axisymmetric setting}, Discrete Con. Syn. Sist.
  Series A, 38 (2018), pp.~4509--4536.

\bibitem{Hencl_Onninen_2018}
{\sc S.~Hencl and J.~Onninen}, {\em Jacobian of weak limits of {S}obolev
  homeomorphisms}, Adv. Calc. Var., 11 (2018), pp.~65--73.

\bibitem{Llavona86}
{\sc J.~G. Llavona}, {\em Approximation of continuously differentiable
  functions}, vol.~130 of North-Holland Mathematics Studies, North-Holland
  Publishing Co., Amsterdam, 1986.

\bibitem{Muller90}
{\sc S.~M\"uller}, {\em Higher integrability of determinants and weak
  convergence in {$L^1$}}, J. Reine Angew. Math., 412 (1990), pp.~20--34.

\bibitem{MuSp95}
{\sc S.~M{\"u}ller and S.~J. Spector}, {\em An existence theory for nonlinear
  elasticity that allows for cavitation}, Arch. Rational Mech. Anal., 131
  (1995), pp.~1--66.

\bibitem{MuQiYa94}
{\sc S.~M{\"u}ller, Q.~Tang, and B.~S. Yan}, {\em On a new class of elastic
  deformations not allowing for cavitation}, Ann. Inst. H. Poincar\'e Anal. Non
  Lin\'eaire, 11 (1994), pp.~217--243.

\end{thebibliography}
}

\end{document}